\newtheorem{lemma}{Lemma}
\newtheorem{remark}{Remark}
\DeclareMathOperator*{\argmin}{arg\,min}
\newcommand{\R}{\mathbb{R}}
\newcommand{\bx}{\mathbf{x}}
\newcommand{\bv}{\mathbf{v}}
\newcommand{\by}{\mathbf{y}}
\newcommand{\bb}{\mathbf{b}}
\newcommand{\bg}{\mathbf{g}}
\newcommand{\bu}{\mathbf{u}}
\newcommand{\bbf}{\mathbf{f}}
\newcommand{\MP}{P} %
\newcommand{\MR}{R} %
\newcommand{\SP}{\mathcal{P}} %
\newcommand{\SR}{\mathcal{R}} %
\newcommand{\PoU}{\mathcal{D}} %
\newcommand{\cosa}{\cos\theta}
\newcommand{\sina}{\sin\theta}
\title{Algebraic Multigrid with Overlapping Schwarz Smoothers
and Local Spectral Coarse Grids for Least Squares Problems%
\thanks{Corresponding author: \texttt{southworth@lanl.gov}.}%
\thanks{Funding: BSS and GAW were supported by the Laboratory Directed Research and Development program of Los Alamos National Laboratory, under project number 20240261ER. HAD and ET were supported by the Fusion Futures Programme. As announced by the UK Government in October 2023, Fusion Futures aims to provide holistic support for the development of the fusion sector. Los Alamos National Laboratory report number LA-UR-25-32252.}%
}
\author{%
Ben S. Southworth\thanks{Theoretical Division, Los Alamos National Laboratory, USA.} \and
Hussam Al Daas\thanks{Science and Technology Facilities Council, Scientific Computing Department, Rutherford Appleton Laboratory, UK.} \and
Golo A. Wimmer\footnotemark[3] \and
Ed Threlfall\thanks{UK Atomic Energy Authority, Culham Campus, UK.}%
}
\begin{document}
\maketitle
\allowdisplaybreaks

\begin{abstract}
This paper develops a new algebraic multigrid (AMG) method for sparse least–squares systems of the form $A=G^TG$ motivated by challenging applications in scientific computing where classical AMG methods fail. First we review and relate the use of local spectral problems in distinct fields of literature on AMG, domain decomposition (DD), and multiscale finite elements. We then propose a new approach blending aggregation-based coarsening, overlapping Schwarz smoothers, and locally constructed spectral coarse spaces. By exploiting the factorized structure of $A$, we construct an inexpensive symmetric positive semidefinite splitting that yields local generalized eigenproblems whose solutions define sparse, nonoverlapping coarse basis functions. This enables a fully algebraic and naturally recursive multilevel hierarchy that can either coarsen slowly to achieve AMG-like operator complexities, or coarsen aggressively-with correspondingly larger local spectral problems—to ensure robustness on problems that cannot be solved by existing AMG methods. The method requires no geometric information, avoids global eigenvalue solves, and maintains efficient parallelizable setup through localized operations. Numerical experiments demonstrate that the proposed least-squares AMG-DD method achieves convergence rates independent of anisotropy on rotated diffusion problems and remains scalable with problem size, while for small amounts of anisotropy we obtain convergence and operator complexities comparable with classical AMG methods. Most notably, for extremely anisotropic heat conduction operators arising in magnetic confinement fusion, where AMG and smoothed aggregation fail to reduce the residual even marginally, our method provides robust and efficient convergence across many orders of magnitude in anisotropy strength.
\end{abstract}

\section{Introduction}

This paper focuses on developing robust algebraic multilevel solvers, combining ideas from domain decomposition and overlapping Schwarz methods with algebraic multigrid (AMG), for large-scale sparse linear systems of the least squares form
\begin{equation}\label{eq:Ax=b}
A \bx\coloneqq  G^TG\bx = \bb,
\end{equation}
where $G\in\R^{m\times n}$ is sparse, and $A\in\R^{n\times n}$ is symmetric positive definite (SPD). We are particularly targeting methods that can achieve AMG-like operator complexities and scalability when feasible (e.g., isotropic Poisson), while providing robustness on problems that are too hard for existing pointwise AMG methods. The least squares formulation is particularly relevant for matrices of the form $D_1 + B^T D_2^{-1} B$, where $D_1\in\R^{n\times n},D_2\in\R^{m\times m} $ are diagonal and positive, ensuring they have a real-valued square root, and $B\in\R^{m\times n}$ is sparse. We can then define 
\[ 
    G^T \coloneqq \begin{pmatrix} \sqrt{D_1} & B^T\sqrt{D_2^{-1}} \end{pmatrix},
\]
arriving at $A = D_1 + B^T D_2^{-1} B = G^TG$. More generally, $D_1$ and $D_2$ can also be of non-diagonal form, as long as they admit a sparse factorization $D_1 = G_1^TG_1$, $D_2 = G_2^T G_2$. 

Efficient and scalable solvers for large-scale linear systems are essential in scientific computing. The particular structure we discuss above arises in a variety of applications, including weighted least-squares problems, constrained optimization, least-squares finite elements discretisations, mixed finite element discretizations, hybridized formulations of partial differential equations, and Schur complement approximation. One notable example is the Schur complements of symmetric block $2\times 2$ systems
\begin{equation}
    \begin{bmatrix}
        M_{1} & B^T \\ B & -M_{2}
    \end{bmatrix}
    \begin{bmatrix}
        \bx \\ \by 
    \end{bmatrix}
    = \begin{bmatrix}
        \bbf \\ \bg
    \end{bmatrix},
\end{equation}
where $M_{1},M_{2}$ are mass matrices that are either directly invertible (e.g., due to mass lumping or element-local mass matrices as in DG methods) or spectrally equivalent to diagonal matrices, and the efficacy of standard block preconditioning approximations is fully defined by the approximation of the Schur complement \cite{southworth2020fixed}
\[
    S \coloneqq M_{1} + B^TM_{2}^{-1}B \sim D_{1} + B^TD_{2}^{-1}B.
\]
Elliptic and parabolic operators also often admit such a structure, where the spatial operator can be factored, e.g. $-\Delta = (\nabla)^T\nabla$. 

Algebraic multigrid (AMG) methods~\cite{Xu.2017} and domain decomposition (DD) techniques~\cite{nataf15} are among the most successful strategies for preconditioning large sparse SPD systems. When applicable, AMG is among the fastest and most scalable linear solver, in terms of both problem size and parallel efficiency. AMG methods are attractive for their algebraic nature and scalability, and are originally designed for, and most effective on, scalar equations arising from elliptic or parabolic PDEs. However, there are many systems arising in numerical PDEs that remain challenging or infeasible for current AMG techniques, some of which are of the form in \eqref{eq:Ax=b}. Recently, several robust multigrid solvers have been proposed to solve a variety of elliptic PDEs by using techniques from exterior calculus analysis, which showed that robustness can only be achieved by using DD- or patch-based smoothers, e.g. \cite{brubeck2024multigrid,farrell2021pcpatch}. Spectral DD methods, on the other hand, offer robustness through the use of overlapping Schwarz-based smoothers and coarse spaces constructed from local generalized eigenvalue problems, e.g. \cite{Galvis.2010,Efendiev.2012,Scheichl.2011,Nataf.2011,Spillane.2014,Spillane.2025},
allowing them to adaptively bound the condition number of the preconditioned matrix. However, their setup cost can be significant, particularly since large subdomains are typically used. Although most DD methods are two-level, recently there have been multilevel extensions, e.g. \cite{Heinlein.2023,Gander.2020,Bastian.2022,Daas.2021}.

In this work, we propose a new multilevel least-squares AMG-DD (LS-AMG-DD) method that combines ideas from spectral domain decomposition and AMG, tailored specifically for SPD matrices of the form \eqref{eq:Ax=b}. The overarching objective is a method that can naturally coarsen slowly and construct a sparse multilevel hierarchy with low operator complexities like classical AMG methods, or if necessary coarsen aggressively and use large local spectral problems to define robust coarse grids for problems that cannot be solved with existing AMG approaches and slow coarsening. This balance allows for the construction of a multilevel hierarchy in a recursive and straightforward manner, without the need for expensive global operations or large-scale eigenvalue problems unless absolutely necessary. In both cases, we utilize overlapping Schwarz smoothers based on aggregation of algebraic degrees of freedom \cite{Scheichl.2007zs,frommer2014adaptive} to overcome limitations inherent to pointwise smoothers. The LS-AMG-DD method is fully algebraic and exploits the sparsity and multiplication structure of $A$ to construct local SPD splitting matrices efficiently. The resulting preconditioner is scalable and robust, with a setup phase that is both parallelizable and efficient.

The remainder of the paper and contributions are organized as follows.
\begin{itemize}
    \item Section~\ref{sec:background} reviews relevant background on domain decomposition and AMG methods. In particular, in \Cref{sec:background:spectral} we provide a detailed discussion on spectral coarse grids in AMG and DD methods, providing direct relations between the typically independent fields of work on AMG, DD, and generalized multiscale finite elements (GMsFEMs). \item Section~\ref{sec:amg-dd} develop a new multilevel preconditioner (LS–AMG–DD) for least-squares operators $A=G^TG$ that couples algebraic aggregation, local spectral coarse spaces, and overlapping Schwarz smoothing. We derive a simple SPSD splitting based on the factorization 
    $A=G^TG$ enabling inexpensive local generalized eigenproblems, and show how to propagate the least-squares structure to coarse levels, yielding a fully algebraic multilevel method.
    \item Section~\ref{sec:numerics} provides numerical experiments demonstrating that the method achieves AMG-like performance on classical elliptic problems and robust convergence on extreme anisotropy in magnetic confinement fusion, where classical AMG fails entirely.
    \item Section~\ref{sec:conclusion} concludes with a discussion of future directions.
\end{itemize}

\section{Background}
\label{sec:background}

Because we are pulling from distinct fields of both AMG and DD, we will first set the notation. We define $\MP$ to be the multigrid interpolation operator (and in future work, $\MR$ the multigrid restriction operator if $\MR\neq\MP^T$). For overlapping Schwarz relaxation and subdomains, we will assume a partitioning of nodes in the sparsity graph of $A$ into a set of nonoverlapping aggregates $\{\omega_i\}_{i=1}^{n_c}$. Here we are working in the fully discrete setting, so DOFs and aggregates always refer to matrix-vector entries (as opposed to, e.g. mesh or physical domain information) unless otherwise notes. For the $i$th aggregate, we define the following vectors of node indices:
\begin{align*}
    \omega_i &: \textnormal{ set of nodes in $i$th nonoverlapping aggregate}\\
    \Gamma_i &: \textnormal{ set of neighboring nodes connected to $\omega_i$}\\
    \Omega_i &: \textnormal{ $i$th overlapping aggregate}, \omega_i\cup\Gamma_i\\
    \Delta_i &: \textnormal{ all nodes not in $i$th overlapping aggregate}, \{1,\ldots,n\} \backslash \Omega_i
\end{align*}
With the $i$th aggregate, we define the Schwarz restriction matrices to the different sets via appropriate subsets by row of the identity matrix: 
\begin{equation}\label{eq:SR}
    \SR_{\omega_{i}} \coloneqq I(\omega_i,:), \hspace{3ex}
    \SR_{\Gamma_{i}} \coloneqq I(\Gamma_i,:), \hspace{3ex} 
    \SR_i  \coloneqq I(\Omega_i,:), \hspace{3ex} 
    \SR_{\Delta_{i}} \coloneqq I(\Delta_i,:).
\end{equation}
Similarly, a permutation matrix for defining and analyzing the overlapping smoothers will also be defined later and denoted by $\SP$, where $\SR$ and $\SP$ are differentiated from multigrid transfer operators by the script notation.

\subsection{Algebraic multigrid}
\label{sec:background:amg}

AMG is a well-known multilevel iterative method to solve large sparse matrix equations, and consists of two properties: (i) relaxation, and (ii) coarse-grid correction. Relaxation is some approximation $M^{-1}\approx A^{-1}$ which is computationally cheap to apply, and is used as a residual correction 
\[
    \bx \mapsfrom \bx + M^{-1}(\bb - A\bx).
\]
Most AMG methods use simple pointwise smoothers such as variations in Jacobi or Gauss-Seidel. Such pointwise smoothers almost universally attenuate error associated with large eigenvalues. 

Coarse-grid correction is based on a Galerkin projection of the residual to a coarse space, wherein a coarse-grid operator is inverted, and the result interpolated back as a fine-grid correction. Given interpolation operator $P\in\R^{n\times n_c}$ for $n_c\ll n$, coarse-grid correction takes the form
\[
    \bx \mapsfrom \bx + P(P^TAP)^{-1}P^T(\bb - A\bx).
\]
Here $A_c\coloneq P^TAP$ is the Galerkin coarse-grid operator. The associated error propagation operator $I - \Pi_A \coloneqq I-P(P^TAP)^{-1}P^TA$ is an $A$-orthogonal projection onto the range of $P$. Combining pre- and/or post-relaxation with coarse-grid correction yields a two-level method; multilevel is achieved by recursively approximating $(P^TAP)^{-1}$ in an analogous manner until the coarsest grid is small enough to easily solve directly. 

For an effective AMG method, it is critical that relaxation and coarse-grid correction attenuate complementary components of the error. Under the assumption of pointwise smoothers, coarse-grid correction is thus typically responsible for attenuating error associated with small eigenvalues. The fundamental components of designing an AMG algorithm are then coarsening the total size of the problem, and proceeding to define a sparse interpolation operator around this coarsening. 
Due to the original target of multigrid and AMG methods being elliptic or parabolic problems, it is implicitly assumed in many forms of AMG that \emph{error associated with small eigenvalues varies slowly in the direction of strong connections of the matrix.} When such a property holds, AMG is often one of the fastest and most efficient (in parallel) algebraic solvers. In contrast, if a problem has fundamental modes that do not follow this behavior, many forms of AMG quickly break down. 

Much of the AMG literature can broadly be classified as either coarsening using a coarse-fine (CF) partitioning of points, or an aggregation of fine level points into a non-overlapping partition of unity. Here we will focus on the aggregation class of methods, due to its conceptual similarity to (algebraic) DD methods. In aggregation-based AMG, aggregates are constructed based on matrix entries. Typically this is done by first constructing a strength-of-connection (SOC) matrix trying to identify a subset of the matrix graph corresponding to ``strong'' connections $\mathcal{S}$. A typical SOC measure is  along the lines of $a_{ij} \in\mathcal{S}$ if $|a_{ij}| \geq \theta \max_{k\neq i} |a_{ik}|$, where $0 \leq \theta \leq 1$ is some user-specified tolerance. The ``standard'' aggregation routine from \cite{Vanek.1996} consists of multiple passes. First, aggregates are formed as greedy disjoint strongly coupled neighborhoods. Then, unaggregated points are grouped into existing aggregates for which they share some strong connection. There is a final cleanup phase for any remaining unaggregated points, but this is typically not necessary. Many routines have been developed for aggregation, but as we will discuss later, too much guiding of the aggregation process can undermine our proposed interpolation operators. 

The classical smoothed aggregation (SA) method forms a tentative interpolation operator defined to be constant over the nonoverlapping aggregates \cite{Vanek.1996}. The sparsity pattern is then expanded and the range of interpolation improved by applying one or two smoothing iterations, e.g. weighted Jacobi, to the tentative interpolation operator. This is effectively approximating geometric bilinear interpolation in the algebraic setting by using smoothing applied to a local constant vector to expand and smooth the basis function support. Recent methods have considered energy minimization over fixed sparsity patterns, e.g. \cite{olson2011general,manteuffel2017root}. SA and energy-min aim to capture the global near nullspace in the range of interpolation via defining columns as locally smooth vectors over aggregates with some expanded support.

\subsection{Algebraic domain decomposition smoothers}
\label{sec:background:dd}

Here we discuss the construction of a purely algebraic overlapping domain decomposition smoother based on aggregates of matrix entries or nodes. Given a set of aggregates $\omega_1, \ldots, \omega_{n_c}$ covering the set of nodes in the sparsity graph of $A$, we add to each aggregate the neighboring nodes in the sparsity graph of $A$ to obtain a set of overlapping aggregates $\Omega_1, \ldots, \Omega_{n_c}$. Recall we define the set of neighboring nodes to $\omega_i$ as $\Gamma_i$, and the resulting overlapping aggregates $\{ \Omega_i\}_{i=1}^{n_c}$ are given by $ \Omega_i \coloneqq \omega_i\cup \Gamma_i$, with respective complements $\{ \Delta_i\}_{i=1}^{n_c}$ for $\Delta_i \coloneqq \{1,\ldots,n\} \backslash \Omega_i$.
We define the $i$th aggregate matrix as the principal submatrix of $A$,
\[
    A_{i} \coloneqq A(\Omega_i,\Omega_i).
\]
We also attach to each overlapping aggregate a partition-of-unity (PoU) matrix $\PoU_i$ such that:
\begin{equation}
    \label{eq:PoU}
    \sum_{i=1}^{n_c} \SR_i^T \PoU_i \SR_i = I.
\end{equation}
While any PoU matrices satisfying \cref{eq:PoU} are acceptable, it is usual to consider diagonal matrices. In this work, we consider the simple diagonal Boolean PoU matrices, where a diagonal value is set to 1 if the corresponding node is associated with a nonoverlapping aggregate and 0 otherwise,
\begin{equation}\label{eq:pou2}
        [\PoU_i]_{\ell\ell} = \begin{cases} 1 & \ell \in \omega_i, \\ 0 & \ell\in\Gamma_i . \end{cases}
\end{equation}
As we will see later, this is advantageous to reduce fill-in of Galerkin coarse-grid matrices.

Using the Schwarz restriction matrices, we can define a permutation matrix 
\begin{equation}
    \label{eq:permutation_matrix}
    \SP_i = I([\SR_{\omega_{i}},\SR_{\Gamma_{i}}, \SR_{\Delta_{i}}], :),
\end{equation}
where $[\SR_{\omega_{i}},\SR_{\Gamma_{i}}, \SR_{\Delta_{i}}]$ is a vector reordering of the rows. 
This permutation matrix allows to reorder the matrix global $A$ to a three-by-three block tridiagonal form
\begin{equation}
    \label{eq:ordered_A}
    \SP_i A \SP_i^T = \begin{pmatrix}
        \SR_{\omega_{i}}A\SR_{\omega_{i}}       & \SR_{\omega_{i}} A \SR_{\Gamma_{i}}     & \\
        \SR_{\Gamma_{i}}A\SR_{\omega_{i}}  & \SR_{\Gamma_{i}} A \SR_{\Gamma_{i}} & \SR_{\Gamma_{i}} A \SR_{\Delta_{i}}\\
        & R_{\Delta_{i}}A \SR_{\Gamma_{i}} & \SR_{\Delta_{i}}A\SR_{\Delta_{i}}
    \end{pmatrix},
\end{equation}
corresponding to nodes in aggregate $i$ ($\omega_i$), neighboring nodes directly connected to aggregate $i$ ($\Gamma_i$), and all remaining nodes ($\Delta_i$), respectively.

Domain decomposition smoothing operators can be seen as a combination of local solves on overlapping aggregates. Local smoothing involves three steps, namely, restriction, correction, and prolongation (in this sense similar to the larger mulitigrid solver). These steps can be formulated using the matrices we defined above.
The restriction operation restricts a given global vector to the local overlapping aggregate.
The correction step corresponds to applying the inverse of the local matrix $A_{i}$. Note that here we only consider $A_{i}$ which is a principal submatrix of the global matrix $A$, but other local matrices can be considered, e.g. \cite{Gander.2020}. The prolongation operation takes a correction vector defined on the overlapping aggregate $\Omega_i$ and expands it globally by setting values outside the overlapping aggregate (that is, on $\Delta_i)$ to zero. Before and/or after the local solve, the local vector can be weighted by using the partition of unity $\PoU_i$. As we have defined it, this simply corresponds to restricting the local vector to only be nonzero in the nonoverlapping aggregate $\omega_i$. 

To define a domain decomposition smoother, a rule needs to be set on the ordering of these local operations as well as the relation with the global residual. Here we consider the restricted additive Schwarz (RAS) smoother 
\begin{equation}
    \label{eq:ras}
    \bx_{k+1} = \bx_k + M_{RAS}^{-1} (\bb-A\bx_k) \coloneqq \bx_k + \sum_{i=1}^{n_c} \SR_i^T \PoU_i A_{i}^{-1} \SR_{i} (\bb-A\bx_k),
\end{equation}
where we restrict the global residual to overlapping aggregates, invert $A_i$ on each overlapping aggregate simultaneously, use the PoU to define the local correction vectors to be nonzero only on nonoverlapping aggregates $\omega_i$ (and zero on $\Gamma_i$), and last interpolate the result back to the global space. The weighting of local vectors by PoU ensures that each global DoF only gets a single update from the additive residual correction in \eqref{eq:ras}.

This paper is focused on SPD problems and the multilevel preconditioners are typically used as preconditioners for Krylov methods. It is desirable to use MINRES or conjugate gradient (CG), both of which require SPD preconditioners, but the RAS smoother is inherently nonsymmetric. Recall that Gauss-Seidel has a (nonsymmetric) forward and backward pass formulation that are adjoints of each other, but when used as a pre- and post-smoother one arrives at a symmetric preconditioner. We do a similar thing with RAS, defining the RAS-transpose (RAS-T) smoother as
\begin{equation}
    \label{eq:ras-t}
    \bx_{k+1} = \bx_k + M_{RAS-T}^{-1} (\bb-A\bx_k) \coloneqq \bx_k + \sum_{i=1}^{n_c} \SR_i^T  A_{i}^{-1}\PoU_i \SR_{i} (\bb-A\bx_k),
\end{equation}
where by swapping the order of $\PoU_i$ and $A_i$, $M_{RAS-T}^{-1}= M_{RAS}^{-T}$. 
One can also consider a multiplicative Schwarz (MS) smoother, but to avoid overly optimistic results associated with ordering that is not amenable in parallel, we do not consider such smoothers.
\subsection{Spectral coarse grids}
\label{sec:background:spectral}

For sparse SPD problems, particularly arising from the discretization of elliptic operators, the concept of energy minimization is fundamental to multigrid methods, e.g. \cite{olson2011general,manteuffel2017root}, where energy is defined by the matrix-induced energy norm $\|\bu\|_A^2 = \langle A\bu,\bu\rangle$. Indeed, error and convergence is typically measured in the $A$-norm, and thus coarse-grid correction is expected to capture ``algebraically smooth'' error, that is, error with a small energy that is not effectively attenuated by pointwise relaxation. For many elliptic PDEs and discretizations, algebraically smooth error is also geometrically smooth, which provides the bridge between algebraic and geometric MG methods. 

Convergence of two-level algebraic preconditioners for SPD matrices is relatively well understood in the $A$-norm \cite{Falgout.2004,Brandt.2011,Xu.2017}. For (potentially nonsymmetric) smoother $M\approx A$, define the symmetrized smoother $\widetilde{M}\coloneqq M(M + M^T-A)^{-1}M^T$, where $I - \widetilde{M}^{-1}A = (I - M^{-T}A)(I - M^{-1}A)$. Note, for SPD $A$ and arbitrary nonsymmetric relaxation $M$ (e.g., the lower triangular part of $A$), this choice of $\widetilde{M}$ ensures that the two-level preconditioner is SPD in the $A$-inner product. Let $\{\bv_\ell\}_{\ell=1}^n$ be the generalized eigenvectors associated with the generalized eigenvalue problem 
\begin{equation}\label{eq:gevp}
    AV = \widetilde{M}V\Lambda,
\end{equation}
with eigenvalues $0<\lambda_1\leq ... \leq \lambda_n$. Then the optimal two-grid transfer operator of size $n_c\ll n$ with respect to error propagation in the $A$-norm is given by defining columns of $P_\#$ as the smallest $n_c$ generalized eigenvectors,
\[
    P_\# \coloneqq \begin{bmatrix} \bv_1 & ... & \bv_{n_c}\end{bmatrix}.
\]
In the typical setting of pointwise smoothers used in AMG, $\widetilde{M}$ is spectrally equivalent to the diagonal $D$ of $A$, and one can consider the simpler eigenvalue problem $AV = DV\Lambda$. 

The optimal interpolation operator can be defined for arbitrary relaxation operator $\widetilde{M}$. However, $P_{\#}$ itself is typically dense and also too expensive to compute in practice. For practical algorithms, a natural idea is to construct $P\approx P_{\#}$ through local spectral approximations. This lines up with a broad field of literature that uses local spectral coarse grids, e.g. \cite{Chartier.2003,Chartier.2007,Lashuk.2008,Efendiev.2013,Spillane.2025,al2019class,Brezina.2011,Chung.2018,Galvis.2010,Efendiev.2012,Scheichl.2011,Nataf.2011,Spillane.2025,Efendiev.2010,Efendiev.2012,Babuska.2011}. Most methods of this class are built on a symmetric positive semi-definite (SPSD) splitting of the problem. Let $\{\Omega_i\}_{i=1}^{n_c}$ denote \emph{overlapping} aggregates that cover the full domain $\Omega$, and let $\SR_i$ denote a binary restriction by value, $\SR_i: \Omega\mapsto\Omega_i$. We define an SPSD matrix splitting to take the form
\begin{equation}\label{eq:spsd}
    A = \sum_i \SR_i^T\widetilde{A}_i\SR_i,
\end{equation}
where $\widetilde{A}_i\in\mathbb{R}^{|\Omega_i|\times|\Omega_i|}$ is a local SPSD submatrix (typically \emph{not} a principal submatrix of $A$).

In addition to the matrix splitting, an additional fundamental component of defining interpolation through local spectral problems is the sparsity pattern of interpolation (i.e. nonzeo entries), and support of global information considered in local spectral problems. We can encompass these choices in a quite general manner by partitioning the discrete problem into a set of nonoverlapping aggregates $\Omega = \cup_i \omega_i$, and overlapping aggregates, where the $i$th overlapping aggregate $\Omega_i = \omega_i \cup \Gamma_i$. We will also refer to nonoverlapping aggregates $\{\omega_i\}$ as \emph{interior} DOFs and the overlapping set $\Gamma_i = \Omega_i\backslash\omega_i$ as \emph{interface} DOFs. Broadly, one can then consider local generalized eigenvalue problems that take the form
\begin{equation}\label{eq:local-gevp}
    \begin{bmatrix} \widetilde{A}_{\omega_i\omega_i} & \widetilde{A}_{\omega_i\Gamma_i} \\
    \widetilde{A}_{\Gamma_i\omega_i} &
    \widetilde{A}_{\Gamma_i\Gamma_i}
    \end{bmatrix}
    \begin{bmatrix} \bv_{\omega_i} \\ \bv_{\Gamma_i} \end{bmatrix} = 
    \lambda    
    \begin{bmatrix} \widetilde{M}_{\omega_i\omega_i} & \widetilde{M}_{\omega_i\Gamma_i} \\
    \widetilde{M}_{\Gamma_i\omega_i} &
    \widetilde{M}_{\Gamma_i\Gamma_i}
    \end{bmatrix}
    \begin{bmatrix} \bv_{\omega_i} \\ \bv_{\Gamma_i} \end{bmatrix},
\end{equation}
for some appropriate SSPD $\widetilde{A}_i,\widetilde{M}_i$, and define interpolation based on local generalized eigenvectors associated with small eigenvalues. This can be motivated by the fact that globally low-energy modes are bounded below by local ones, 
\begin{equation}
    0 \leq \bv^T\SR_i^T\widetilde{A}_i\SR_i\bv \leq \bv^TA\bv
\end{equation}
for all $\bv$, which follows from \eqref{eq:spsd}. To this end, the local spectral framework has a strong relation to energy minimization in general. Let $V_i$ denote the space associated with overlapping aggregate $\Omega_i$. Constructing a space spanned by the smallest eigenvectors of local generalized eigenvalue problems corresponds to a local generalized Rayleigh quotient minimization,
\begin{equation}\label{eq:rayleigh}
    \min_\lambda \widetilde{A}_i\bv = \lambda \widetilde{M}_i\bv \quad\Longrightarrow\quad
    \argmin_{\bv\in V_i} \frac{\langle \widetilde{A}_i\bv,\bv\rangle}{\langle \widetilde{M}_i\bv,\bv\rangle}
    = \argmin_{\bv\in V_i,\|\bv\|_{\widetilde{M}_i}=1} \|\bv\|_{\widetilde{A}_i}^2.
\end{equation}
Here, we are using $\widetilde{A}_i$ from the SPSD splitting to provide a local approximation of energy, and minimizing the local energy normalized with respect to a local smoother $\widetilde{M}_i$. 

In the following sections we show how the principal families of spectral coarse-space methods -- spectral AMGe, GMsFEM, GenEO, and multilevel algebraic Schwarz -- appeal to this same local energy-minimization principle, with particular choices of nonoverlapping and overlapping aggregates, local approximations $\widetilde{A}_i$ and $\widetilde{M}_i$, and potential harmonic (energy minimizing) extensions from the interior (interface) to the interface (interior). Note that this is a concrete realization of the variational
and energy--minimization principles formalized in \cite{Xu.2017}, where AMG methods are analyzed around (i) a \emph{stable decomposition} of the fine space into local subspaces, and (ii) an \emph{approximation property} that bounds the energy of the error after coarse--grid correction. There they assume SPSD splittings of $A$ and $\widetilde{M}$ to define a local energy pair $(A_i,\widetilde{M}_i)$ on subspace $V_i$. The smallest eigenmodes from the resulting local generalized eigenproblem are used to approximate the globally optimal subspace.

The discussion that follows is briefly summarized in \Cref{tab:method_comparison}, with description of the proposed LS-AMG-DD method that will be described in \Cref{sec:amg-dd}.

\begin{table}[!htb]
\small
\centering
\caption{Comparison of major spectral coarse-grid methodologies. 
Here $\omega_i$ denotes nonoverlapping interior DOFs, 
$\Gamma_i$ the interface/overlap, 
and $\Omega_i = \omega_i \cup \Gamma_i$.}
\label{tab:method_comparison}
\begin{tabular}{>{\raggedright\arraybackslash}p{3.3cm} >{\raggedright\arraybackslash}p{4cm} >{\raggedright\arraybackslash}p{4cm}}
\hline
\textbf{Local Domain} &
\textbf{Spectral Problem} &
\textbf{Coarse Basis} \\ \hline\hline

\multicolumn{3}{c}{\textbf{AMGe}} \\\hline
Agglomerated elements (DOFs on $\Omega_i$)
& Eigenproblem on 
$A_{\Omega_i}$ or reduced Schur complement on $\omega_i$ 
with diagonal $\widetilde{M}_i$
& Basis vectors supported on $\omega_i$ or $\Omega_i$;
often smoothed or harmonically extended; 
typically small overlap \\

\multicolumn{3}{c}{\textbf{GMsFEM}} \\\hline
Oversampled coarse neighborhoods (often large $\Omega_i$)
& Weighted Rayleigh quotient on PDE bilinear form;
mass matrix on right-hand side
& Local low-energy modes extended via energy minimizing 
harmonic extension; PoU used for stitching;
good approximation power but high setup cost \\

\multicolumn{3}{c}{\textbf{GenEO (DD)}} \\\hline
Overlapping Schwarz subdomains 
$\Omega_i = \omega_i \cup \Gamma_i$
& Interface eigenproblem on $\Gamma_i$ with 
$A_{\Gamma_i\Gamma_i}$ on the RHS
& Coarse basis from PoU-weighted interface modes
with harmonic extension into $\omega_i$;
robust to heterogeneity; large subdomains \\

\multicolumn{3}{c}{\textbf{LS-AMG-DD (this work)}} \\\hline
Aggregates of algebraic DOFs with one-layer overlap
& Local GEP 
$\PoU_i A_i \PoU_i\, u = \lambda\, \widetilde{A}_i u$ 
using SPSD splitting derived from $A = G^T G$
& Nonoverlapping coarse basis from restricted local modes;
block-diagonal $P$ maintaining sparsity;
naturally multilevel via $G_{\ell+1} = G_\ell P_\ell$ \\

\hline
\end{tabular}
\end{table}

\subsubsection{Element based AMG}
\label{sec:background:spectral:amge}

The idea of using finite element stiffness matrices and associated SPSD splitting for AMG was first proposed in \cite{Brezina.19994c}. There, they solve local eigenvalue problems based on element stiffness matrices restricted to a nonoverlapping partition of DOFs. Specifically, they enrich the tentative interpolation in smoothed aggregation by identifying the smallest eigenmodes of \eqref{eq:rayleigh} with $\widetilde{M}_i \coloneqq I_{\omega_i}(I - Q_i)I_{\omega_i}$, where $I_{\omega_i}$ restricts to the nonoverlapping DOFs and $Q_i$ is a projection onto the current range of $P$ restricted to this aggregate. This identifies low-energy modes not currently captured by interpolation. Element-based AMG was then proposed in \cite{Brezina.2001}, with close follow-up \cite{Henson.2001}. The basic idea was to use element matrices to define an SPSD splitting and local energies, without relying on classical AMG assumptions like algebraically smooth error varying slowly in the direction of strong connections. In the first AMGe papers, interpolation methods are built around local energy minimization principles. This was later extended to the spectral AMGe class of methods \cite{Chartier.2003,Chartier.2007,Brezina.2011,Marques.2016}, which define coarse spaces as the span of local low-energy generalized eigenmodes.

Suppose the global operator $A$ arises from a conforming finite-element assembly:
\[
  A \;=\; \sum_{e\in\mathcal{T}_h} A_e,
\]
with local element matrices $A_e \geq 0$. Following \cite{Jones.2001}, partition the fine mesh (or DOF graph) into a set of nonoverlapping \emph{agglomerates} $\{\Omega_i\}_{i=1}^N$, which correspond to connected unions of elements. Let $\SR_i:V\to V_i$ be defined as in the DD setting \eqref{eq:SR} to restrict global vectors to the discrete DOFs of $\Omega_i$ and $\SR_i^T$ inject them back. Here we have a slight difference in notation, as $\Omega_i$ is a collection of agglomerated fine mesh \emph{elements}, but the discrete DOFs associated with $\Omega_i$ overlap with the DOFs of other agglomerations $\Omega_j,i\neq j$ at cell interfaces. The local stiffness matrix on $\Omega_i$ is then given by
\begin{equation}\label{eq:el-Atilde}
  \widetilde{A}_i \coloneqq \sum_{e\subset\Omega_i} R_i A_e R_i^T,
\end{equation}
which is SPSD on $V_i$. Note that $\widetilde{A}_i$ is assembled from the element contributions within $\Omega_i$, and thus differs from the principal submatrix $\SR_i^TA\SR_i$, which would include assembled interface couplings.

The first paper on spectral AMGe \cite{Chartier.2003} assumes $A_i$ has been symmetrically scaled to have unit diagonal. In the context of \eqref{eq:rayleigh}, this is equivalent to $\widetilde{M}_i=D_i$ given by the diagonal of $\widetilde{A}_i$, and the method uses smallest eigenmodes solved on the full agglomerate stiffness matrices (including interior and interface DOFs) with Neumann-type energy to define the global columns of interpolation. If DOFs are shared across multiple agglomerates, the contributions (in the corresponding rows of interpolation) are normalized relative to contributions from all agglomerates. In \cite{Chartier.2007}, a modified formulation is proposed which identifies \emph{minimal intersection sets} based on mesh information (faces, edges, or vertices) shared by neighboring agglomerates. For each intersection $\sigma$, a Schur complement $S_\sigma$ is formed by eliminating the DOFs interior to the union of agglomerates touching $\sigma$, and a small eigenproblem
$S_\sigma y = \lambda M_\sigma y$ is solved on the DOFs of $\sigma$. In \cite{Chartier.2007}, eigenfunctions are only used to select coarse DOFs, and interpolation is constructed using local energy minimization as in \cite{Jones.2001}. In \cite{Lashuk.2008}, a modified formulation is suggested that performs a harmonic extension of intersection set eigenmodes into the interior of element agglomerates (for specific form see \eqref{eq:extension-interior} in \Cref{sec:background:spectral:dd}).

More recent forms of AMGe \cite{Brezina.2012vh,Marques.2016} have moved away from the minimal intersecting sets and appeal to the simpler original form of \cite{Brezina.19994c}, where we decompose DOFs of $\Omega_i$ into \emph{interior} $\omega_i$ and
\emph{interface} (trace) $\Gamma_i$ as in \eqref{eq:local-gevp}. In \cite{Brezina.2012vh}, two methods are proposed to form a block-diagonal (non-overlapping) \emph{tentative} interpolation over nonoverlapping interior DOFs, which is then smoothed as in classical smoothed aggregation to expand the sparsity pattern. The first method selects the smallest eigenvectors of \eqref{eq:rayleigh} with $\widetilde{M}_i=D_i$, and then restricts these local eigenvectors by value to interior DOFs. The second method eliminates the interface first by setting $\widetilde{M}_i=\widehat{D}$, where $\widehat{D}_{\ell\ell} = 0$ if $\ell\in\Gamma_i$ and $\widehat{D}_{\ell\ell} = (D_i)_{\ell\ell}$ if $\ell\in\omega_i$. In the context of \eqref{eq:local-gevp}, this takes the form 
\begin{equation}\label{eq:local-gevp-interior}
    \begin{bmatrix}
        A_{\omega_i\omega_i} & A_{\omega_i\Gamma_i}\\[2pt]
        A_{\Gamma_i \omega_i} & A_{\Gamma_i\Gamma_i}
    \end{bmatrix}
    \begin{bmatrix} \bu_{\omega} \\ \bu_{\Gamma} \end{bmatrix}
    = 
    \lambda 
    \begin{bmatrix}
        A_{\omega_i\omega_i} & \mathbf{0}\\
        \mathbf{0} & \mathbf{0}
    \end{bmatrix}
    \begin{bmatrix} \bu_{\omega} \\ \bu_{\Gamma} \end{bmatrix},
\end{equation}
or equivalently a reduced Schur-complement-on-the-interior eigenvalue problem
\begin{equation}
    S_{\omega_i}\bv_{\omega_i} = \lambda D_{\omega_i}\bv_{\omega_i},\quad\textnormal{ where } \quad
    S_{\omega_i} \coloneqq A_{\omega_i\omega_i} - A_{\omega_i\Gamma_i}A_{\Gamma_i\Gamma_i}^{-1}A_{\Gamma_i \omega_i}.
\end{equation}
Note that \eqref{eq:local-gevp-interior} implicitly defines a harmonic extension to the interface/overlap for each eigenmode, where
\begin{equation}\label{eq:extension-overlap}
    \bu_{\Gamma_i} = -A_{\Gamma_i\Gamma_i}^{-1}A_{\Gamma_i \omega_i}\bv_{\omega_i}.
\end{equation}
This extension is not directly used in \cite{Brezina.2012vh,Marques.2016}, but will come up in the following section on GMsFEMs.

We emphasize that the original \cite{Brezina.19994c} and most recent forms \cite{Brezina.2012vh,Marques.2016} of spectral AMG use element stiffness matrices and solve local eigenproblems of the form in \eqref{eq:rayleigh}, where $\widetilde{M}_i$ is simply the diagonal of the matrix. This makes sense from the perspective of local energy minimization, as these methods utilize pointwise smoothers, which are spectrally equivalent to the diagonal of the matrix.

\subsubsection{Generalized multiscale finite elements}
\label{sec:background:spectral:GMsFEM}

The same spectral energy minimization principle underlying AMGe was developed slightly later in the multiscale finite element community as the Generalized Multiscale Finite Element Method (GMsFEM). Here, we see that these methods and the corresponding literature share significant overlap with spectral AMGe, and is built around analogous spectral problems and weighted Rayleigh quotient minimization. This has a strong relation to the theory of optimal approximation in generalized finite element methods from \cite{Babuska.2011}. 

Building on the classical multiscale FEM~\cite{Hou.1997}, in ~\cite{Efendiev.2011,Efendiev.2013} local spectral problems on overlapping coarse neighborhoods are introduced to identify low–energy modes of a heterogeneous elliptic operator and define so-called multiscale basis functions. Each local basis function $\phi_{i,k}$ is a minimizer of a continuous Rayleigh quotient as in \eqref{eq:rayleigh}, with $\widetilde{A}_i$ given by the local bilinear form \eqref{eq:el-Atilde} and $\widetilde{M}_i$ given by a local mass matrix (potentially weighted with a projection of heterogeneous coefficients). The lowest eigenmodes of these local problems define multiscale basis functions of the coarse space, analogous to the low–energy subspaces used in spectral AMGe. Consistent with being motivated on the level of finite element approximation, overlapping coarse bases are stitched together using partition of unity (PoU) functions. GMsFEMs were designed as a multiscale approximation technique, but the resulting approximation is a Galerkin projection of the fine discretization and can be used in the context of two-level solvers as well \cite{Efendiev.2013,Efendiev.2012}. Such an approach predated the formal GMsFEM \cite{Efendiev.2010} (by the same authors), and has been successfully used recently for extremely anisotropic problems as motivating this work \cite{vasilyeva2025multiscale}. 

The fundamental difference between GMsFEM methods and variations in spectral AMGe methods lies in the support of local subdomains defining coarse basis functions. With respect to the finite element mesh, spectral AMGe methods agglomerate a set of fine elements, and local subdomains defining the generalized eigenvalue problems only overlap on shared facet DOFs of neighboring agglomerates. In contrast, GMsFEMs define a coarse \emph{mesh}, and define local eigenvalue problems on ``oversampled'' subdomains, typically with respect to the support of overlapping coarse basis functions. Thus spectral AMGe local subdomains overlap with one layer of \emph{fine} DOFs shared with neighboring subdomains, while GMsFEM local subdomains overlap with one layer of \emph{coarse} DOFs shared with neighboring subdomains, which likely contains many layers of fine DOFs. Because each coarse neighborhood contains many fine elements, the oversampled eigenproblems capture long-range correlations and yield contrast-independent approximations. The result is more powerful approximation properties of GMsFEMs compared with spectral AMGe methods, while also introducing additional computational costs and challenges. This includes obvious costs like the need to solve larger local eigenvalue problems, as well as more subtle challenges such as the difficulty of (i) multilevel extensions due to significant fill-in of Galerkin coarse grids, and (ii) parallel implementations, when local subdomains will typically cover all DOFs owned by multiple MPI processes. More recently, a two-stage constraint–energy–minimizing form of
GMsFEM was proposed in~\cite{Chung.2018}, where eigenvalue problems are first solved restricted to coarse \emph{elements}. This is more or less identical to solving over the interior of agglomerates as in spectral AMGe methods discussed previously \cite{Brezina.2012vh} and shown in \eqref{eq:local-gevp-interior}. Then, basis functions are extended to the oversampled (overlapping) subdomains via energy-minimizing harmonic extensions as defined in \eqref{eq:extension-overlap}, similar in principle to the smoothing used in spectral AMGe \cite{Brezina.2012vh}, and analogous to the harmonic extensions to \emph{interior} DOFs from trace eigenvalue problems also arising in AMGe literature \cite{Lashuk.2008}. 

\subsubsection{Domain decomposition and GenEO methods}
\label{sec:background:spectral:dd}

The same local spectral and energy--minimization principles that form the basis for spectral AMGe and GMsFEM have also been developed in the DD community. The underlying goal is to construct coarse spaces for additive Schwarz or BDDC/FETI preconditioners that make the resulting two-level methods robust with respect to coefficient heterogeneity and domain partitioning. These methods, culminating in the \emph{generalized eigenproblems in the overlaps} (GenEO) framework, can be interpreted as the DD analogue of the overlapping spectral methods used in GMsFEM and the local Rayleigh--quotient minimizations of
spectral AMGe.

Classical two--level additive Schwarz methods \cite{Toselli.2005} yield convergence bounds (in theory and in practice) that depend on coefficient variation and overlap size, e.g. \cite{Pechstein.2013,Graham.2007}. For highly heterogeneous problems, this constant can grow without bound, making the preconditioner ineffective. This led to the use of local spectral problems to define coarse space operators in DD methods, e.g. \cite{Galvis.2010,Efendiev.2012,Scheichl.2011,Nataf.2011}, that yield convergence independent of heterogeneity. These approaches were formalized in the generalized eigenvalues in the overlap (GenEO) framework \cite{Spillane.2014,Spillane.2025}, where the coarse space is built from the eigenvectors of generalized eigenproblems in the overlapping or interface regions of subdomains, $\Gamma_i$. Let $\{\Omega_i\}$ denote an overlapping decomposition of the domain, with local bilinear forms $a_{\Omega_i}(\cdot,\cdot)$ corresponding to the restriction of the global stiffness form $a(\cdot,\cdot)$, and let $\PoU_i$ be diagonal PoU weights satisfying $\sum_i \SR_i^T \PoU_i \SR_i = I$. On each subdomain $\Omega_i$, GenEO defines the local spectral problem
\begin{equation}\label{eq:geneo-eig}
  a_{\Omega_i}(u,v)
  \;=\;
  \lambda\, a_{\Gamma_i}(\PoU_i u,\,\PoU_i v),
  \qquad \forall v\in V(\Omega_i),
\end{equation}
where $a_{\Gamma_i}(\cdot,\cdot)$ measures the energy in the overlapping/interface region $\Gamma_i$. The eigenvectors associated with the smallest eigenvalues are retained and multiplied by $\PoU_i$ to enforce continuity across overlaps. The span of all such weighted local modes defines the global coarse space $V_H = \sum_i \SR_i^T \PoU_i V_{H,i}$. This coarse space yields a condition number bound independent of both coefficient contrast and overlap size.

Equation~\eqref{eq:geneo-eig} is directly analogous to the local Rayleigh quotient minimization in \eqref{eq:rayleigh} underlying spectral AMGe and GMsFEM, with $\widetilde M_i$ here replaced by the PoU--weighted overlap energy form $a_{\Gamma_i}(\PoU_i\cdot,\PoU_i\cdot)$. In the discrete setting, let $\widetilde{A}_i$ denote the \emph{local} assembled stiffness matrix over $\Omega_i$ with natural/Neumann boundaries analogous to \eqref{eq:el-Atilde}, and $\widehat{A}_{\Gamma_i,\Gamma_i}$ denote a local assembled stiffness matrix over $\Gamma_i$, incorporating potentially functional $\PoU_i$. 
Then in the context of \eqref{eq:rayleigh}, \eqref{eq:geneo-eig} takes the form
\begin{equation}\label{eq:local-gevp-overlap}
    \begin{bmatrix}
        \widetilde{A}_{\omega_i\omega_i} & \widetilde{A}_{\omega_i\Gamma_i}\\[2pt]
        \widetilde{A}_{\Gamma_i \omega_i} & \widetilde{A}_{\Gamma_i\Gamma_i}
    \end{bmatrix}
    \begin{bmatrix} \bu_{\omega_i} \\ \bu_{\Gamma_i} \end{bmatrix}
    = 
    \lambda 
    \begin{bmatrix}
        \mathbf{0} & \mathbf{0}\\
        \mathbf{0} & \widehat{A}_{\Gamma_i\Gamma_i}
    \end{bmatrix}
    \begin{bmatrix} \bu_{\omega} \\ \bu_{\Gamma} \end{bmatrix},
\end{equation}
or equivalently a reduced Schur complement on the overlap eigenvalue problem
\begin{equation}\label{eq:extension-interior}
    S_{\Gamma_i}\bv_{\Gamma_i} = \bu_{\Gamma_i} = \lambda \widehat{A}_{\Gamma_i\Gamma_i}\bu_{\Gamma_i}
    \quad\textnormal{ where }\quad
    S_{\Gamma_i}\coloneqq \widetilde{A}_{\Gamma_i\Gamma_i} - \widetilde{A}_{\Gamma_i\Gamma_i}\widetilde{A}_{\omega_i\omega_i}^{-1}\widetilde{A}_{\omega_i\Gamma_i} 
\end{equation}
with harmonic (energy minimizing) extension from the overlap/interface $\Gamma_i$ to the interior $\omega_i$ naturally imposed via
\[
  \bu_{\omega_i} = -\widetilde{A}_{\omega_i\omega_i}^{-1}\textbf{}_{\omega_i\Gamma_i}\bu_{\Gamma_i}.
\]
Each eigenvector $\bu_\Gamma$ represents a low-energy mode on the interface, and its harmonic extension fills the subdomain interior with the minimum possible energy relative to $a_{\Omega_i}$. The collection of these lifted modes, globally connected through the PoU weights $\PoU_i$, defines the GenEO coarse space. If we replace the stiffness matrix on the right-hand side with a mass matrix or diagonal, this formulation coincides algebraically with a certain spectral AMGe trace eigenproblem under the identification
of overlap DOFs with subdomain interfaces \cite{Lashuk.2008}, and has a duality with later spectral AMGe formulations \cite{Brezina.2012vh} and particularly constrained GMsFEM \cite{Chung.2018}, which solve an interior eigenvalue problem \eqref{eq:local-gevp-interior} and harmonically extend to the overlap \eqref{eq:extension-overlap}. However, here we have a weighted/restricted \emph{stiffness} matrix on the right-hand side of the GEVP -- in the context of algebraic two-level theory, this makes perfect sense as we are now using an overlapping Schwarz relaxation weighted by PoU rather than pointwise relaxation that is spectrally equivalent to the diagonal, and thus $\widetilde{M}_i$ should reflect this. 

While~\cref{eq:extension-interior} was originally proposed in the first presentation of GenEO~\cite{Spillane.2014}, other generalized eigenvalue problems have been presented in the spectral domain decomposition literature, see e.g., \cite{nataf15,Bastian.2022}.

\subsubsection{Algebraic spectral coarse spaces}
In many scenarios, one may not have access to element matrices required for {\it analytic} spectral domain decomposition or algebraic multigrid methods. For instance, the discretisation kernel may not easily provide such information or the matrix does not even stem from a discretisation of a PDE. Algebraic spectral domain decomposition methods emerged from the need for a general framework to design multilevel domain decomposition methods that would, under certain conditions, fall back to {\it analytic} spectral domain decomposition methods.

A fully algebraic framework was first proposed in~\cite{al2019class} where SPSD splitting matrices were introduced along with their properties and how they play a crucial role in developing spectral coarse spaces. Also see \cite{gouarin2024fully}. Once the SPSD splitting matrices are available, the theory and practice of algebraic and analytic spectral domain decomposition follow the same path. This framework proved its success in extending the two-level GenEO method to a multilevel one~\cite{Daas.2021} and designing spectral domain decomposition method for the sparse normal equation matrix \cite{aldaas:2022aa}, diagonally dominant matrices \cite{Daas.2023}, and general matrices~\cite{aldaas25}.

\section{Spectral DD-AMG}
\label{sec:amg-dd}

\subsection{A multilevel method for $A=G^TG$}

Having defined domain decomposition smoothers for a general matrix $A$, we now utilize the special structure of $A=G^TG$ to define a two-level and naturally multilevel algorithm utilizing the domain decomposition smoothers as defined in \eqref{eq:ras} and \eqref{eq:ras-t} on each level. We emphasize that the overarching objective here is to develop an additive decomposition of the matrix $A$ based on overlapping aggregates, wherein each additive term is SPSD. This ensures that the global spectral properties of $A$ are well-represented by local approximation on overlapping aggregates, and our multigrid transfer operators are then constructed based on certain local generalized eigenvalue problems. The two-level algorithm largely follows the development in \cite{al2019class}, and here we propose a new way to facilitate multilevel recursion. The key is ensuring that the coarse grid operator also has a sparse-least squares form, $A_2 = G_2^TG_2$, via appropriate construction of sparse coarse basis functions.

\subsubsection{SPSD splitting and two-level method}

We begin by presenting the two-level algorithm based on a new algebraic SPSD splitting and following the algebraic overlapping Schwarz framework from \cite{al2019class}. An aggregation algorithm is first applied to $A$ to construct $n_c$ nonoverlapping aggregates $\{\omega_i\}_{i=1}^{n_c}$. Following the procedure described in \Cref{sec:background:dd}, we obtain the overlapping aggregates $\{\Omega_i\}_{i=1}^{n_c}$. Note, we associate the aggregates constructed on $A$ with the columns in $G$. Using the permutation matrix $\SP_i$ defined in~\cref{eq:permutation_matrix}, the matrix $G$ can be ordered as
\begin{equation}
    \label{eq:column_permuted_G}
    G \SP_i^T = \begin{pmatrix}
     G_{\omega_{i}} & G_{\Gamma_{i}} & G_{\Delta_{i}}   
    \end{pmatrix}.
\end{equation}
We now consider permuting the rows in $G$ such that we first have the rows containing nonzeros in $G_{\omega_{i}}$ followed by all other rows that are zero in $\omega_i$. We denote the set of indices of the nonzero and zero rows as $\mathrm{nz}_{i}$ and $\mathrm{z}_i$, respectively. Define the permutation matrix $Q_i = I([\mathrm{nz}_i,\mathrm{z}_i],:)$, where $[\mathrm{nz}_i,\mathrm{z}_i]$ is a vector reordering of rows of the identity. Applying $Q_i$ to $G\SP_i^T$, we arrive at the reordered matrix with the following sparsity structure:
\begin{equation}
    \label{eq:permuted_G}
    Q_i G \SP_i^T = \begin{pmatrix}
     G_{\mathrm{nz}_{i},\omega_{i}} & G_{\mathrm{nz}_{i},\Gamma_{i}} & \mathbf{0} \\
                 \mathbf{0} & G_{\mathrm{z}_i,\Gamma_{i}} & G_{\mathrm{z}_i,\Delta_{i}}   
    \end{pmatrix}.
\end{equation}
Therefore, we have
\begin{subequations}
\begin{align}
    \label{eq:GtG}
    \SP_i A \SP_i^T & = \SP_iG^TQ_i^TQ_iG \SP_i^T \\ 
    & =\begin{pmatrix}
        G_{\mathrm{nz}_{i},\omega_{i}}^T G_{\mathrm{nz}_{i},\omega_{i}} & G_{\mathrm{nz}_{i},\omega_{i}}^T G_{\mathrm{nz}_{i},\Gamma_{i}} & \mathbf{0} \\
        G_{\mathrm{nz}_{i},\Gamma_{i}}^T G_{\mathrm{nz}_{i},\omega_{i}} & G_{\mathrm{nz}_{i},\Gamma_{i}}^T G_{\mathrm{nz}_{i},\Gamma_{i}} & \mathbf{0} \\
        \mathbf{0} & \mathbf{0} & \mathbf{0}
    \end{pmatrix} +
    \begin{pmatrix}
        \mathbf{0} & \mathbf{0} & \mathbf{0} \\
        \mathbf{0} &G_{\mathrm{z}_i,\Gamma_{i}}^T G_{\mathrm{z}_i,\Gamma_{i}} & G_{\mathrm{z}_i,\Gamma_{i}}^T G_{\mathrm{z}_i,\Delta_{i}}\\
        \mathbf{0} & G_{\mathrm{z}_i,\Delta_{i}}^T G_{\mathrm{z}_i,\Gamma_{i}} & G_{\mathrm{z}_i,\Delta_{i}}^TG_{\mathrm{z}_i,\Delta_{i}}
    \end{pmatrix}
    \\
    & =\begin{pmatrix}
        G_{\mathrm{nz}_{i},\omega_{i}}^T \\
        G_{\mathrm{nz}_{i},\Gamma_{i}}^T \\
        \mathbf{0}
    \end{pmatrix}
    \begin{pmatrix}
        G_{\mathrm{nz}_{i},\omega_{i}} & G_{\mathrm{nz}_{i},\Gamma_{i}} & \mathbf{0}
    \end{pmatrix}
    +
    \begin{pmatrix}
        \mathbf{0} \\ G_{\mathrm{z}_i,\Gamma_{i}}^T \\  G_{\mathrm{z}_i,\Gamma_{i}}^T 
    \end{pmatrix}
    \begin{pmatrix}
        \mathbf{0} & G_{\mathrm{z}_i,\Gamma_{i}} & G_{\mathrm{z}_i,\Delta_{i}}
    \end{pmatrix}.
\end{align}
\end{subequations}

Equation \eqref{eq:GtG} shows how to split the matrix $A$ as a sum of two SPSD matrices with one of them only being nonzero in the overlapping aggregate $\Omega_i$. This falls into the broader class of SPSD splittings originally defined in \cite{al2019class}, but by nature of the least squares formulation here constructing a valid splitting is relatively simple/natural. 
Note that
\[ 
    G^TG = \sum_{\ell=1}^m \bg_\ell^T\bg_\ell,
\]
where $\bg_\ell$ is the $\ell$th row of $G$. Now consider taking the outer product in the left side of \eqref{eq:GtG} for every $\omega_\ell$, corresponding to rows of $G$ that are nonzero in columns of aggregate $\omega_\ell$. By nature of $\{\omega_\ell\}_{\ell=1}^{n_c}$ providing a nonoverlapping covering of the domain, every row of $G$ will have nonzero entries in at least one aggregate $\omega_\ell$. To that end, if we compute the local outer product for every aggregate $\omega_\ell$, we necessarily compute all elements of $A = G^TG$. Many indices $i\in[1,n]$ will correspond to nonzero rows in multiple aggregates. To ensure the summation over all local outer products equals the global matrix, we must normalize each row by the number of aggregates in which it has nonzero entries. Altogether this leads to the following result. 

\begin{lemma}
Define the multiplicity of node $j\in\{1,\ldots,m\}$ corresponding to rows of $G$ as $M(j) = |\{k\ : \ j \in \mathrm{nz}_k\}|$, that is, $M(j)$ is the number of overlapping aggregates that the $j$th row of $G$ is nonzero in. Define $W = $diag$(1/M(j)) \in \mathbb{R}^{m\times m}$ to average contributions to node $j$ from its shared overlapping aggregates, and define weighted aggregate matrices
\begin{equation}
    \label{eq:local_splitting}
    \widetilde{A}_{i} \coloneqq \begin{pmatrix}
        G_{\mathrm{nz}_{i},\omega_{i}}^T W(\mathrm{nz}_i,\mathrm{nz}_i) G_{\mathrm{nz}_{i},\omega_{i}} & G_{\mathrm{nz}_{i},\omega_{i}}^T W(\mathrm{nz}_i,\mathrm{nz}_i)G_{\mathrm{nz}_{i},\Gamma_{i}}\\
        G_{\mathrm{nz}_{i},\Gamma_{i}}^T W(\mathrm{nz}_i,\mathrm{nz}_i)G_{\mathrm{nz}_{i},\omega_{i}} & G_{\mathrm{nz}_{i},\Gamma_{i}}^T W(\mathrm{nz}_i,\mathrm{nz}_i)G_{\mathrm{nz}_{i},\Gamma_{i}}
    \end{pmatrix}.
\end{equation}
Then $\widetilde{A}_i$ is SPSD for all $i\in[1,n_c]$ and
\begin{equation}
    \label{eq:split_A}
    A = \sum_{i=1}^{n_c} \SR_i^T \widetilde{A}_{i} \SR_i.
\end{equation}
\end{lemma}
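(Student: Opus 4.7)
The plan is to establish the two claims separately: first the SPSD property of each $\widetilde{A}_i$, which is essentially immediate from its Gram-matrix form, and second the additive identity \eqref{eq:split_A}, which I would derive by rewriting $A=G^TG$ as a sum of rank-one outer products over rows of $G$ and using the multiplicity weights $1/M(j)$ as a partition of unity over the aggregates that ``see'' row $j$.

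For the SPSD claim I would observe that $W$ is a diagonal matrix with strictly positive entries $1/M(j)>0$ (every row of $G$ must appear in at least one $\mathrm{nz}_k$ since $\{\omega_k\}$ covers all columns of $G$ that appear in $A=G^TG$), so the diagonal principal submatrix $W(\mathrm{nz}_i,\mathrm{nz}_i)$ admits a square root $W(\mathrm{nz}_i,\mathrm{nz}_i)^{1/2}$. Writing
\[
\widetilde{A}_i \;=\; \bigl(W(\mathrm{nz}_i,\mathrm{nz}_i)^{1/2}\bigl[G_{\mathrm{nz}_i,\omega_i}\ G_{\mathrm{nz}_i,\Gamma_i}\bigr]\bigr)^T \bigl(W(\mathrm{nz}_i,\mathrm{nz}_i)^{1/2}\bigl[G_{\mathrm{nz}_i,\omega_i}\ G_{\mathrm{nz}_i,\Gamma_i}\bigr]\bigr)
\]
exhibits $\widetilde{A}_i$ as a Gram matrix, hence SPSD.

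For the splitting identity I would start from the row decomposition
$A=\sum_{j=1}^m \bg_j^T \bg_j$, where $\bg_j$ denotes the $j$th row of $G$ viewed as a $1\times n$ vector. The key observation, already justified in the excerpt's derivation of \eqref{eq:permuted_G}, is that if $j\in\mathrm{nz}_i$ (so $\bg_j$ has a nonzero in some column of $\omega_i$), then the support of $\bg_j$ is contained in $\Omega_i=\omega_i\cup\Gamma_i$: any column $\ell$ with $G_{j\ell}\neq 0$ contributes to some entry $A_{k\ell}$ with $k\in\omega_i$, so $\ell$ is a graph-neighbor of $\omega_i$ in $A$ and hence $\ell\in\Omega_i$. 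Next I would use the partition-of-unity property $\sum_{i:\,j\in\mathrm{nz}_i}\tfrac{1}{M(j)}=1$ to split each outer product, swap the order of summation, and obtain
\[
A \;=\; \sum_{j=1}^m \bg_j^T\bg_j \;=\; \sum_{i=1}^{n_c}\ \sum_{j\in\mathrm{nz}_i}\tfrac{1}{M(j)}\,\bg_j^T\bg_j.
\]
By the support observation, the inner sum is zero outside rows/columns in $\Omega_i$, and its restriction to $\Omega_i\times\Omega_i$ coincides exactly with $\widetilde{A}_i$ as defined in \eqref{eq:local_splitting}. Embedding back to the global ordering via $\SR_i^T(\cdot)\SR_i$ yields \eqref{eq:split_A}.

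The main obstacle I anticipate is bookkeeping rather than any genuine difficulty: one has to be careful about the two permutations $\SP_i$ and $Q_i$ on rows versus columns, and about the fact that the support statement ``$j\in\mathrm{nz}_i\Rightarrow \mathrm{supp}(\bg_j)\subset\Omega_i$'' requires the convention that $\Gamma_i$ contains \emph{every} sparsity-graph neighbor of $\omega_i$ in $A$, as set up in \Cref{sec:background:dd}. Once these conventions are fixed, both parts of the lemma reduce to the Gram-matrix identification and the scalar identity $\sum_{i:\,j\in\mathrm{nz}_i}1/M(j)=1$.
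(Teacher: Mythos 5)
Your proof is correct and follows essentially the same approach as the paper, which argues SPSD from the Gram-matrix factorization via $W^{1/2}$ and the splitting identity from the row decomposition $A=\sum_j \bg_j^T\bg_j$, the multiplicity weights $1/M(j)$ serving as a partition of unity over aggregates, and the support containment $\mathrm{supp}(\bg_j)\subset\Omega_i$ for $j\in\mathrm{nz}_i$ established in \eqref{eq:permuted_G}. Your version simply spells out the bookkeeping that the paper leaves implicit in the phrase ``the proof follows from the above discussion.''
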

\begin{proof}
    The proof follows from the above discussion and noting that $W$ is diagonal with all diagonal entries $\geq 0$, in which case it admits a unique real-valued square root. Then $\widetilde{A}_i$ can be expressed as a symmetric block outer product for all $i$, ensuring it is SPSD. 
\end{proof}

Given this decomposition and principal submatrices as used in our smoother, we will solve local generalized eigenvalue problems involving $\widetilde{A}_i$ and use the resulting local eigenvectors to define our global transfer operators. In order to maintain sparsity, we will restrict the eigenvectors to only be nonzero on nonoverlapping subdomains. Let $\widehat{Z}_i$ denote the local eigenvector matrix for aggregate $i$. We then use the Schwarz transfer operators $\{\SR_{\omega_i}^T\}_{i=1}^{n_c}$ that map from nonoverlapping local aggregates to the global domain to define the multigrid interpolation operator as the first $n_c$ columns of local eigenvectors in $\widehat{Z}$:
\begin{equation}\label{eq:mg-p}
    \MP \coloneqq
    \begin{pmatrix}
        \SR_{\omega_1}^T\widehat{Z}_1 & ... & \SR_{\omega_{n_c}}^T \widehat{Z}_{n_c}
    \end{pmatrix}.
\end{equation}
Note that the resulting interpolation operator in \eqref{eq:mg-p} is block-diagonal by nonoverlapping aggregate, \emph{with no overlap across aggregates.}

Coarse-grid correction takes the standard form
\begin{equation}\label{eq:cgc}
    \bx_{k+1} = \bx_k + P(P^TAP)^{-1}P^T(\bb - A\bx_k),
\end{equation}
and a natural two-level method arises by coupling algebraic domain decomposition smoothers, e.g. \eqref{eq:ras}, with \eqref{eq:cgc}. Typically this would be done in a multiplicative fashion, but one can also consider an additive method
\[
    \bx_{k+1} = \bx_k + M_2^{-1} (\bb-A\bx_k) \coloneqq 
        \bx_k + (\MP (\MP^TA\MP)^{-1} \MP^T  + M_1^{-1}) (\bb-A\bx_k),
\]
for domain decomposition smoother $M_1^{-1}$. In the case that $M_1^{-1}$ correponds to an additive Schwarz method (that is, \eqref{eq:ras} without the scaling local matrices by the PoU $\PoU_i$), it was shown in \cite{al2019class} that the eigenvalues of the preconditioned operator $M_2^{-1}A$ lie in the range
\[
\sigma\left(M_2^{-1}A\right) \subset \Big[ (2+(2k_c + 1)\tau)^{-1}, k_c+1 \Big],
\]
where $k_c$ is the number of colors required to color the aggregates such that each two neighboring aggregates do not share the same color.

The Schwarz relaxation blocks are given by the overlapping subdomains $\{\Omega_i\}$ constructed during aggregation.  Each $\Omega_i$ forms one block in the RAS \eqref{eq:ras} or RAS--T \eqref{eq:ras-t} smoother; the corresponding principal submatrix $A(\Omega_i,\Omega_i)$ is extracted once during setup and reused throughout iterations.  The Boolean PoU vectors $\mathcal{D}_{i}$ ensure that, in RAS, each fine-grid DOF receives exactly one update. Our multigrid method then uses one iteration of RAS \eqref{eq:ras} as a pre-smoother and one iteration of RAS-T \eqref{eq:ras-t} as a post-smoother, which yields an SPD preconditioner when $A$ is SPD.

\subsubsection{DD local eigenvalue problem}

Appealing to the two-level theory developed in \cite{al2019class}, one way to construct transfer operators is based on solving the following local generalized eigenvalue problems over each overlapping aggregate:
\begin{equation}
    \label{eq:gevp}
    \PoU_i A_{i} \PoU_i \bu_{\Omega_i} = \lambda \widetilde{A_{i}}\bu_{\Omega_i}.
\end{equation}
For each aggregate, all eigenvectors associated with eigenvalues \emph{larger} than a user-specified tolerance $\tau$, are kept to include as part of the global interpolation operator. Note that because the generalized eigenproblem is written in the form \eqref{eq:gevp}, the roles of ``smooth'' and ``oscillatory'' modes are reversed relative to the classical  $A\bu=\lambda M\bu$ ordering. Therefore, the relevant coarse-space modes correspond to the largest generalized eigenvalues.

The PoU as defined in \eqref{eq:pou2} restricts local eigenvectors to the nonoverlapping aggregates $\{\omega_i\}$. It is also applied to the rows and columns of the left-hand side of the generalized eigenvalue problem in \eqref{eq:gevp}. As a result, we can obtain the required eigenvectors from \eqref{eq:gevp} restricted to nonoverlapping aggregates by solving reduced local Schur complement generalized eigenvalue problems. The generalized eigenvalue problem in \eqref{eq:gevp} explicitly takes the form
{\footnotesize
\begin{equation}
    \label{eq:gevp2}
    \begin{pmatrix}
        G_{\mathrm{nz}_{i},\omega_{i}}^T G_{\mathrm{nz}_{i},\omega_{i}} & \mathbf{0} \\ \mathbf{0} & \mathbf{0} 
    \end{pmatrix}
    \begin{pmatrix}
        \bu_{\omega_i} \\ \bu_{\Gamma_i}
    \end{pmatrix}
    = \lambda 
    \begin{pmatrix}
        G_{\mathrm{nz}_{i},\omega_{i}}^T W(\mathrm{nz}_i,\mathrm{nz}_i) G_{\mathrm{nz}_{i},\omega_{i}} & G_{\mathrm{nz}_{i},\omega_{i}}^T W(\mathrm{nz}_i,\mathrm{nz}_i)G_{\mathrm{nz}_{i},\Gamma_{i}}\\
        G_{\mathrm{nz}_{i},\Gamma_{i}}^T W(\mathrm{nz}_i,\mathrm{nz}_i)G_{\mathrm{nz}_{i},\omega_{i}} & G_{\mathrm{nz}_{i},\Gamma_{i}}^T W(\mathrm{nz}_i,\mathrm{nz}_i)G_{\mathrm{nz}_{i},\Gamma_{i}}
    \end{pmatrix}
    \begin{pmatrix}
        \bu_{\omega_i} \\ \bu_{\Gamma_i}
    \end{pmatrix},
\end{equation}
}where only generalized eigenvectors restricted to $\omega_i$, $\bu_{\omega_i}$, are used in constructing interpolation \eqref{eq:mg-p}. Note that such eigenvectors associated with nonzero eigenvalues can equivalently be obtained via a Schur complement generalized eigenvalue problem,
\begin{align}
    \label{eq:gevp-S}
\begin{split}
        & G_{\mathrm{nz}_{i},\omega_{i}}^T G_{\mathrm{nz}_{i},\omega_{i}} 
        \bu_{\omega_i} 
    = \lambda \widetilde{S}_{\omega_i}\bu_{\omega_i}, \quad\textnormal{where} \\
    &\widetilde{S}_{\omega_i} \coloneqq 
        G_{\mathrm{nz}_{i},\omega_{i}}^T W(\mathrm{nz}_i,\mathrm{nz}_i) G_{\mathrm{nz}_{i},\omega_{i}} - \\
        &\hspace{4ex}G_{\mathrm{nz}_{i},\omega_{i}}^T W(\mathrm{nz}_i,\mathrm{nz}_i)G_{\mathrm{nz}_{i},\Gamma_{i}} 
        \left(G_{\mathrm{nz}_{i},\Gamma_{i}}^T W(\mathrm{nz}_i,\mathrm{nz}_i)G_{\mathrm{nz}_{i},\Gamma_{i}}\right)^{-1}
        G_{\mathrm{nz}_{i},\Gamma_{i}}^T W(\mathrm{nz}_i,\mathrm{nz}_i)G_{\mathrm{nz}_{i},\omega_{i}}.
\end{split}
\end{align}
Thus we choose the largest eigenvalues of the reduced generalized eigenvalue problem in \eqref{eq:gevp-S}. 

\begin{remark}
    Starting from the block $2\times 2$ operators $A_i$ and $\widetilde{A}_i$, one could consider the full generalized eigenvalue problem (without $\mathcal{D}_i$), or six variations that result in a natural reduced eigenvalue problem over $\omega_i$, due to different application of $\mathcal{D}_i$ on the left and/or right of each operator. From an AMGe perspective, some of the formulations other than \eqref{eq:gevp} may seem appealing/more natural, but tests have indicated that no other reduced eigenvector formulation works well. Simple tests indicated a growth in iteration count of $5-10\times$ purely due to modifying the local generalized eigenvalue problem.
\end{remark}

\subsubsection{Multilevel method} 

In \cite{Daas.2021} a strategy is proposed to generalize the construction of an algebraic two-level method to further levels relying on the fact that the sum of the Galerkin projection (on the coarse space in a two-level method) of local SPSD matrices is itself a local SPSD splitting of the coarse space operator, subject to a conveniently chosen aggregation strategy. While it facilitated the constructing of more levels, the restriction on choosing aggregates at the coarse level can yield load imbalance or result in hindered performance of the multilevel method if the aggregation process was not consistent across all levels. 

In this article, we design a simple technique to extend the two-level method proposed in \cite{Daas.2021} to an arbitrary number of levels with complete freedom in forming aggregates at each level without affecting the performance of the multilevel method.
The technique is simple, and relies on the structure of the coarse space matrix resulting from the matrix $A=G^TG$.
Since $P$ is sparse, $G_2 = GP$ is also sparse, and the coarse space matrix $A_2 = P^TG^TGP = G_2^TG_2$ is itself in the same product structure as the system matrix in \cref{eq:Ax=b}.
Hence, the same procedure can be repeated to construct a two-level method for $A_2$.

\subsection{Algorithmic discussion}
\label{sec:amg-dd:alg}

An outline of the algorithm is provided in \Cref{alg:lsamgdd}, and we provide additional details and commentary in this section.

\begin{algorithm}[t]
\caption{Multilevel LS--AMG--DD Construction}
\label{alg:lsamgdd}
\begin{algorithmic}[1]

\REQUIRE Least--squares operator $A_0 = G_0^T G_0$, maximum levels $L_{\max}$,
maximum coarse size $n_{\mathrm{coarse}}$, aggregation passes $n_{\mathrm{agg}}$,
target condition number $\kappa$, and minimum coarsening ratio $c_{\min}$.

\STATE Initialize level index $\ell = 0$.

\WHILE{$\ell < L_{\max}-1$ and $\dim(A_\ell) > n_{\mathrm{coarse}}$}

    \STATE 
    \textbf{Aggregation.} Construct $n_{\mathrm{agg}}$ passes of aggregation to obtain a
    nonoverlapping\par partition $\{\omega_i^{(\ell)}\}$ of fine-grid DOFs.
    Add unaggregated nodes to neighboring \par aggregates.

    \STATE \textbf{Overlapping subdomains.}
    For each aggregate $i$, define an overlapping set\par
    $\Omega_i^{(\ell)} = \omega_i^{(\ell)} \cup$ (neighbors in $A_\ell$),
    and a Boolean partition-of-unity vector $\mathcal{D}_{i,\ell}$ \par that is
    $1$ on $\omega_i^{(\ell)}$ and $0$ on $\Omega_i^{(\ell)}\!\setminus\!\omega_i^{(\ell)}$.

    \STATE \textbf{Smoothing.} Define each overlapping subdomain $\Omega_i^{(\ell)}$ as a block for the RAS \par and RAS–T smoothers, using $A_\ell(\Omega_i^{(\ell)},\Omega_i^{(\ell)}$ as the local operator.

    \STATE \textbf{Local SPSD splitting.}
    Use rows of $G_\ell$ intersecting $\Omega_i^{(\ell)}$ to build 
    local SPSD \par matrix $\widetilde{A}_{i,\ell}$ that satisfies
    $
        A_\ell = \sum_{i=1}^{N_c} \SR_{i,\ell}^T \, \widetilde{A}_{i,\ell} \,\SR_{i,\ell},
    $
    where $\SR_{i,\ell}$ restricts to \par$\Omega_i^{(\ell)}$.

    \STATE \textbf{Local generalized eigenproblems.}
    Restrict all operators to nonoverlapping \par DOFs $\omega_i^{(\ell)}$
    and solve the reduced local GEP
    $
        A_{\omega\omega} \, u = \lambda \, S_{\omega} \, u,
    $
    where $S_\omega$ is the \par Schur complement of $\widetilde{A}_{i,\ell}$.
    Select all eigenvectors $\lambda > \tau_\ell$ derived from $\kappa$ up \par to the maximum number per aggregate $|\omega_i|/c_{\min}$.

    \STATE \textbf{Interpolation (coarse basis).}
    Assemble block-diagonal interpolation
    \par $
        P_\ell = 
        \begin{bmatrix}
            R_{\omega_1}^T \widehat{Z}_{1,\ell} &
            \cdots &
            R_{\omega_{N_c}}^T \widehat{Z}_{N_c,\ell}
        \end{bmatrix},
    $
    where $\widehat{Z}_{i,\ell}$ contains selected eigenvectors \par on $\omega_i^{(\ell)}$.

    \STATE \textbf{Galerkin projection and LS propagation.}
    Form next-level operators \par
    $
        G_{\ell+1} = G_\ell P_\ell$ and $
        A_{\ell+1} = P_\ell^T A_\ell P_\ell = G_{\ell+1}^T G_{\ell+1}.
    $

    \STATE Increase the level: $\ell \gets \ell + 1$.

\ENDWHILE

\STATE \textbf{return} Multilevel hierarchy $\{A_\ell, G_\ell, P_\ell\}$.
\end{algorithmic}
\end{algorithm}

\subsection{Aggregation}
A main component of this framework is to use AMG aggregation techniques to coarsen relatively slowly and facilitate multilevel hierarchies, rather than full domain decomposition approaches like Metis. One is tempted to further use more advanced AMG aggregation techniques and SOC measures to define ``good'' aggregates, however this does not appear to be useful in practice. Similar observations were made in early AMGe work \cite{Chartier.2003}. More advanced aggregation routines almost always use some form of SOC to do the aggregation. As we have discussed though, for harder problems this is exactly one of the issues with AMG methods, that error may \emph{not} vary smoothly in the direction of strong connections, making SOC measures ill-suited to defining good aggregates. Moreover, the role of the local spectral problems is precisely to capture a good coarse grid, and our experience has been trying to ``help'' this process with advanced SOC or aggregation causes more harm than good in terms of worse convergence and/or larger complexities. To that end, here we use exclusively the ``standard'' aggregation routine as implemented in PyAMG based directly on matrix entries rather than SOC. We have additionally added a simple routine to assign any unaggregated fine nodes (rows of the aggregation matrix with all zeros) to one of the aggregates of their neighbors in the matrix. This is relatively rare, but necessary to ensure all DOFs are represented in the overlapping aggregates defining the Schwarz-based smoother. 

In addition, as we will see later there are problems where convergence cannot be obtained with the slower coarsening provided by single-pass AMG aggregation. Effectively, the sum over selected local eigenvectors mapped to the global domain does not adequately capture the global near null space. This can be remedied by constructing larger aggregates, which yields local generalized eigenvalue problems that span a larger portion of the domain. We do this by performing multiple passes of aggregation, effectively aggregating aggregates. Specifically, we form a tentative interpolation operator $T_1$ of zeros and ones from the aggregation matrix, and tentative coarse-grid $T_1^TAT_1$. We then repeat the aggregation process to form a second tentative interpolation operator $T_2$, and define our aggregation matrix via $T_1T_2$. 

\subsection{Coarsening and eigenvectors}
The algebraic-DD methods motivating this work have rigorous theory that relate the condition number of the preconditioned operator to local eigenvalues, which provides an automated mechanism to choose local eigenvectors via specifying target condition number $\kappa$. We have found this relation is less robust in the multilevel setting, at least in terms of being an accurate predictor of convergence. Thus for this paper, we set a fixed $\kappa = 50$ indicating we take all local modes that are relatively ill-conditioned, but limit to a specified coarsening ratio per aggregate. Specifically, a local eigenvalue threshold is specified from $\kappa$ from the formulae in \cite{al2019class}
\[
    \mathrm{thresh} \coloneqq \max \left\{ 0.1, \frac{\kappa - n_{\textnormal{color}}}{n_{\textnormal{color}}n_{\textnormal{multiplicity}}} \right\}.
\]
Then, if we set a minimum coarsening ratio of three, we will take all eigenvectors with associated eigenvalue larger than $\mathrm{thresh}$ up to a third of the size of the aggregate. To reduce complexity, it can help to coarsen faster on certain levels than others, in particular we have found one can coarsen faster on coarser levels in the hierarchy. To that end, we further consider level-specific coarsening, where for example LS$_{3,4}$ denotes a least-squares solver that coarsens by a minimum factor of three (per aggreagte) on the first level and four on all subsequent levels.

\subsection{Implementation}
Our LS-AMG-DD method has been implemented in PyAMG \cite{bell2023pyamg}. The implementation is moderately efficient in terms of performing the most expensive computations in C++ either through the PyAMG backend or using numpy/scipy. However, a number of components such as the local spectral problems are looped over in python, in this case to call the scipy eigensolver, which can be significantly slower than compiled C++ looping. We also construct dense inverse decompositions for the smoother. Performance implementations of algebraic-DD like the HPDDM library \cite{jolivet2021ksphpddm} use sparse Cholesky decompositions for the inverse of subdomain operators, making their setup significantly cheaper than a dense $\mathcal{O}(|\Omega_i|)^3$ operations, and subsequent application and solve $\mathcal{O}(|\Omega_i|)$ operations rather than dense $2|\Omega_i|^2$ operations. For PDE problems, constructing a sparse Cholesky factorization or preconditioner can also be significantly faster than a direct inverse, e.g., \cite{schaefer2021sparse}. To that end, in numerical results we focus on convergence rates and operator complexities. Future work will consider a performant implementation as in, e.g., HPDDM, which is particularly relevant on modern computing architectures and GPUs, where dense linear algebra is extremely fast. 

\section{Numerical results}
\label{sec:numerics}

\subsection{(Anisotropic) Laplacian}
\label{sec:numerics:laplacian}

We begin by studying variations in rotated anisotropic diffusion 
\begin{align}
    -\nabla \cdot K \nabla u & = f, \quad  u\in\Omega, \\
    u & = g, \quad u\in\partial\Omega,
\end{align}
where
\begin{equation}
    Q(\theta) =
    \begin{pmatrix}
    \cosa & -\sina\\
    \sina & \phantom{-}\cosa
    \end{pmatrix}, \quad
    D = 
    \begin{pmatrix}
    \epsilon & 0\\[2pt] 0 & 1
    \end{pmatrix}, \quad
    K = Q D Q^T ,
\end{equation}
for rotation angle $\theta$ and anisotropy ratio $\epsilon$. Such an equation has been widely used as a representative ``hard'' elliptic problem in AMG literature. We formulate a specific tensor grid finite difference discretization such that the assembled linear system can be expressed in the least squares form \eqref{eq:Ax=b}. In particular, on a uniform grid with spacings $h_x, h_y$, define the forward differences
$(D_x^+ u)_{i,j} \coloneqq \tfrac{u_{i+1,j}-u_{i,j}}{h_x}$ and
$(D_y^+ u)_{i,j} \coloneqq \tfrac{u_{i,j+1}-u_{i,j}}{h_y},$
and let the backward differences be the (negative) adjoints $D_x^- \coloneqq -(D_x^+)^T $ and $D_y^- \coloneqq -(D_y^+)^T .$ Then the discrete divergence is $\nabla^- \!\cdot \mathbf{v} = D_x^- v_x + D_y^- v_y,$ and the discrete forward gradient (in Matlab notation) is $\nabla^+ u = \begin{pmatrix}D_x^+ u ; D_y^+ u\end{pmatrix}.$ Note that $K = B B^T $ with $B := Q \sqrt{D}$ and $\sqrt{D}=\mathrm{diag}(\sqrt{\epsilon},\,1)$. Then define the discrete operators
\begin{equation}
G \coloneq B^T  \nabla^+, \qquad
G^T  \coloneqq -\,\nabla^- \!\cdot\, B,
\end{equation}
so that
\begin{equation}
G^T  G\,u = -\,\nabla^- \!\cdot\!\left( B\,B^T \,\nabla^+ u \right)
= -\,\nabla^- \!\cdot\!\left( K\,\nabla^+ u \right).
\end{equation}
Note that for $\theta=0$ and $\epsilon = 1$ this reduces to the classic 5-point finite-difference stencil for the Laplacian. 

We begin by fixing a non-grid-aligned angle of $\theta = \pi/6$ and considering anisotropy ratio from $\epsilon = 1$ (isotropic) to $\epsilon = 10^{-7}$ on a $500\times 500$ spatial grid, resulting in 250,000 total DOFs. \Cref{fig:epsilon_scaling} shows the number of iterations to reduce the residual by a factor of ten (computed based on average convergence factor solving system to $10^{-8}$ relative residual) for each solver, as well as the corresponding operator complexity of the solver. We see that while classical AMG and SA degrade significantly as anisotropy ratio $\epsilon$ decreases, several forms of the LS-AMG-DD solver provide excellent convergence invariant to anisotropy. The operator complexity of the robust solvers is larger than AMG, on the order of 5-6, but this is more than compensated for by the robust convergence. We also emphasize that in the isotropic setting, one can form LS-AMG-DD solvers that coarsen slightly more aggressively (e.g., $LS_4$ or $LS_{3,4}$ that obtain operator complexities only slightly larger than AMG, as well as AMG convergence. Although, these methods are not likely to be competitive with AMG in practice due to higher setup and cycle cost, the fact that they produce comparable convergence and operator complexity as classical AMG methods, but can be naturally extended to be robust for arbitrary anisotropy is critical for robust and efficient black-box AMG solvers.
\begin{figure}[!htb]
  \centering
  \begin{subfigure}[t]{0.475\textwidth}
    \centering
    \includegraphics[width=\textwidth]{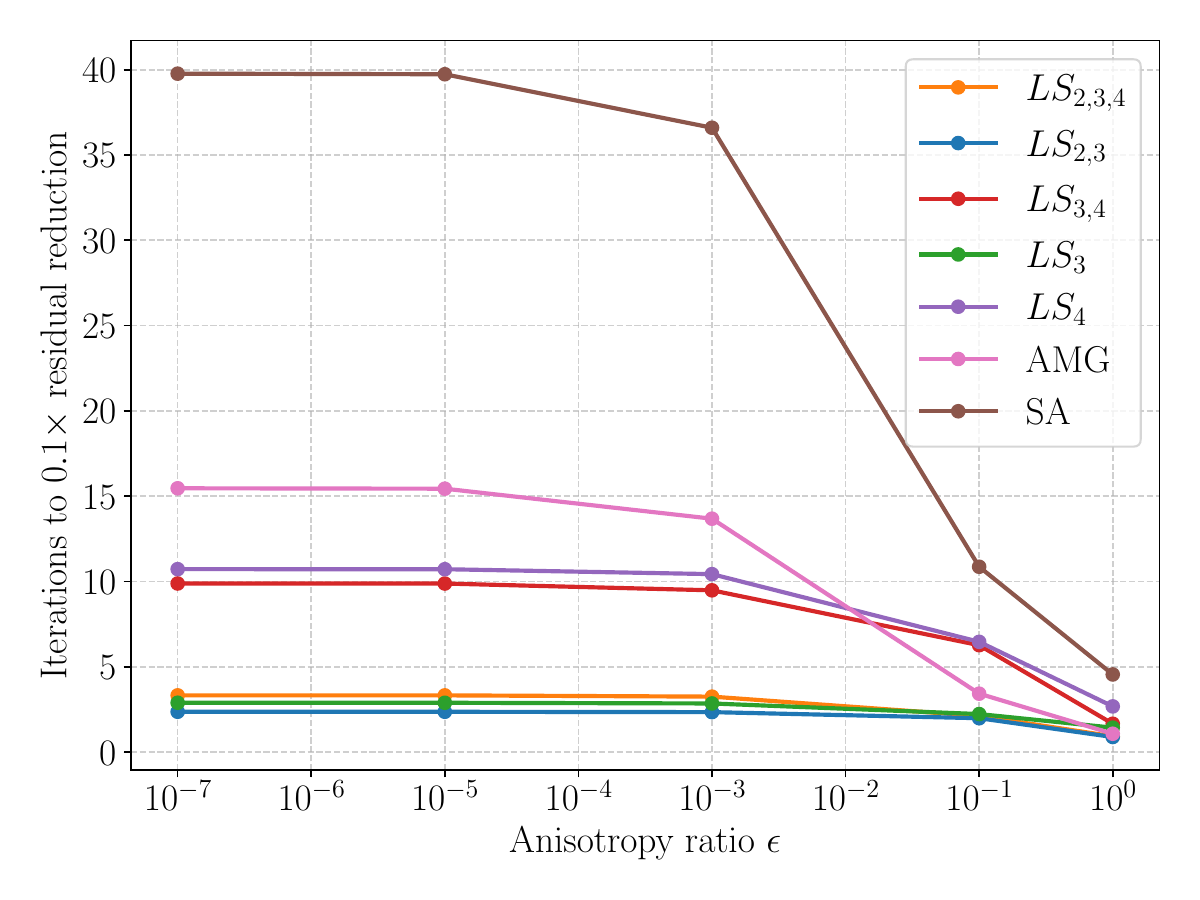}
    \caption{Iterations to 0.1$\times$ residual reduction vs. anisotropy coefficient $\epsilon$}
    \label{fig:iters_vs_epsilon}
  \end{subfigure}
  \begin{subfigure}[t]{0.475\textwidth}
    \centering
    \includegraphics[width=\textwidth]{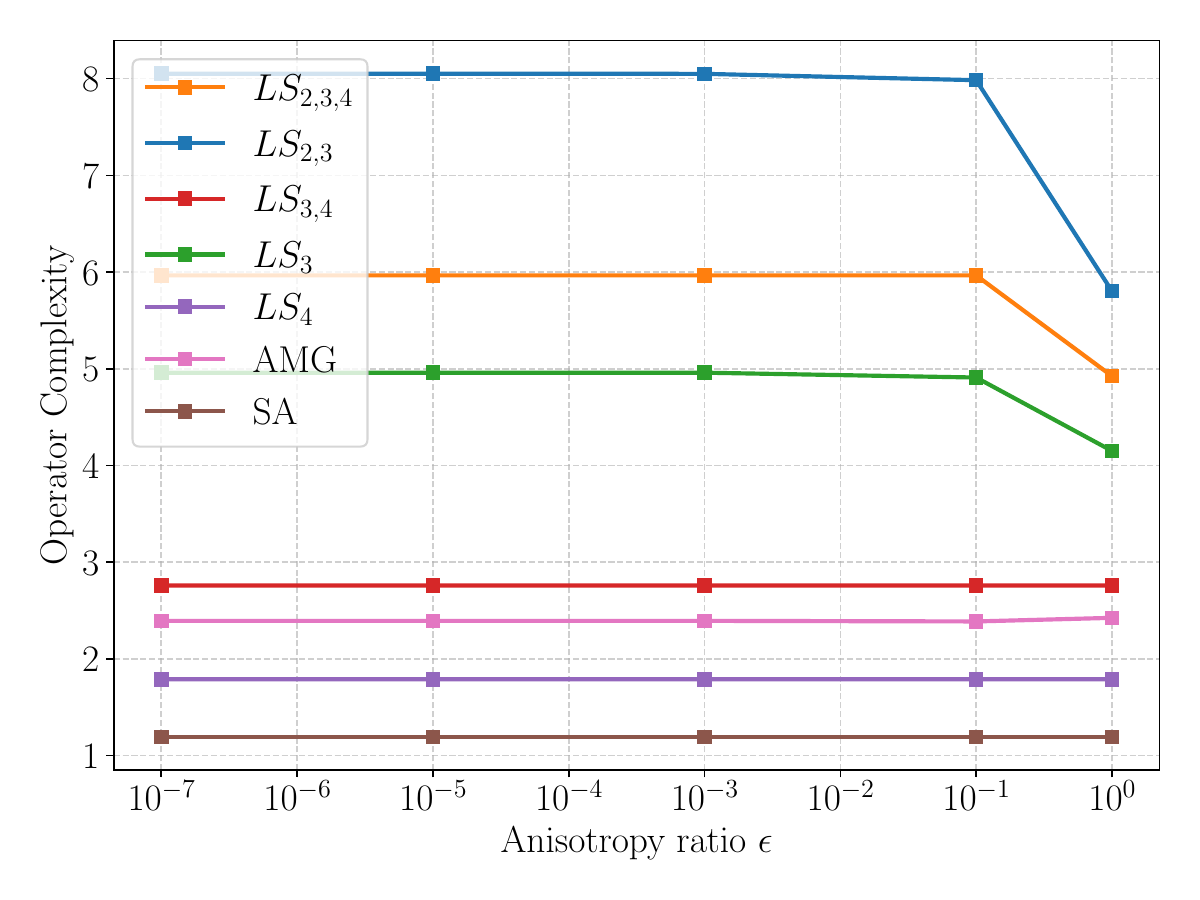}
    \caption{Operator complexity vs. anisotropy coefficient $\epsilon$}
    \label{fig:OC_vs_epsilon}
  \end{subfigure}
  \caption{Solver performance as a function of anisotropy coefficient $\epsilon$ for fixed $N=250,000$ and $\theta = \pi/6$. For reference, the average convergence factor at $\epsilon = 10^{-7}$ for $LS_{2,3,4}$, AMG, and SA are (respectively) $\rho = 0.5, 0.86, 0.94$.}
  \label{fig:epsilon_scaling}
\end{figure}

Next we pick a fixed anisotropy of $\epsilon = 10^{-5}$ and angle $\theta = \pi/6$, and consider scaling in problem size. Analogous results for convergence and operator complexity are shown in \Cref{fig:size_scaling}. As in the case of anisotropy ratio, we see that $LS_{2,3}$, $LS_3$, and $LS_{2,3,4}$ demonstrate perfectly scalable convergence with respect to problem size up to a $1000\times 1000$ grid with 1M DOFs. This is compensated with a slight growth in operator complexity with increase in $N$, but for $LS_{2,3,4}$ this growth is slow and manageable. In contrast, for AMG and SA we see significant growth in iteration count as problem size increases.
\begin{figure}[!htb]
  \centering
  \begin{subfigure}[t]{0.475\textwidth}
    \centering
    \includegraphics[width=\textwidth]{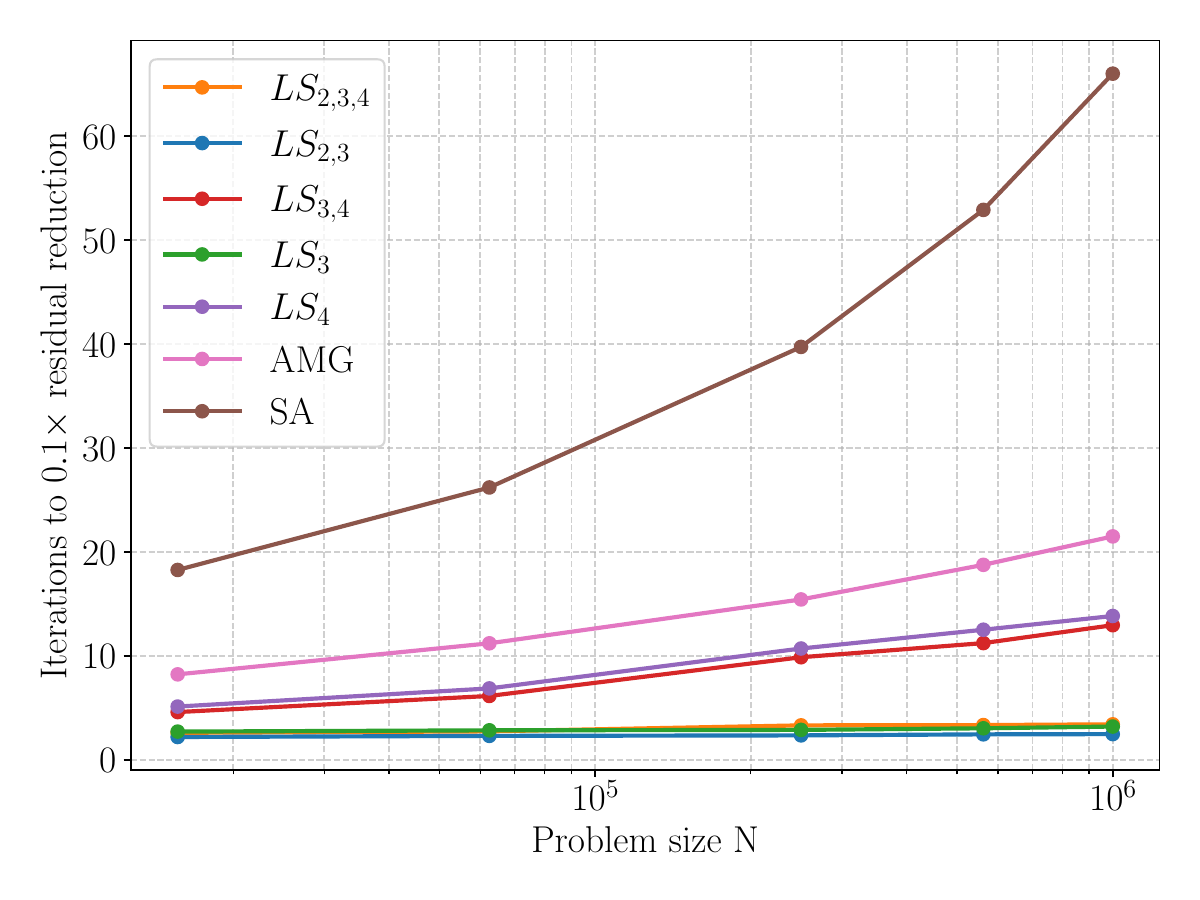}
    \caption{Iterations to 0.1$\times$ residual reduction vs. problem size $N$}
    \label{fig:iters_vs_N}
  \end{subfigure}
  \begin{subfigure}[t]{0.475\textwidth}
    \centering
    \includegraphics[width=\textwidth]{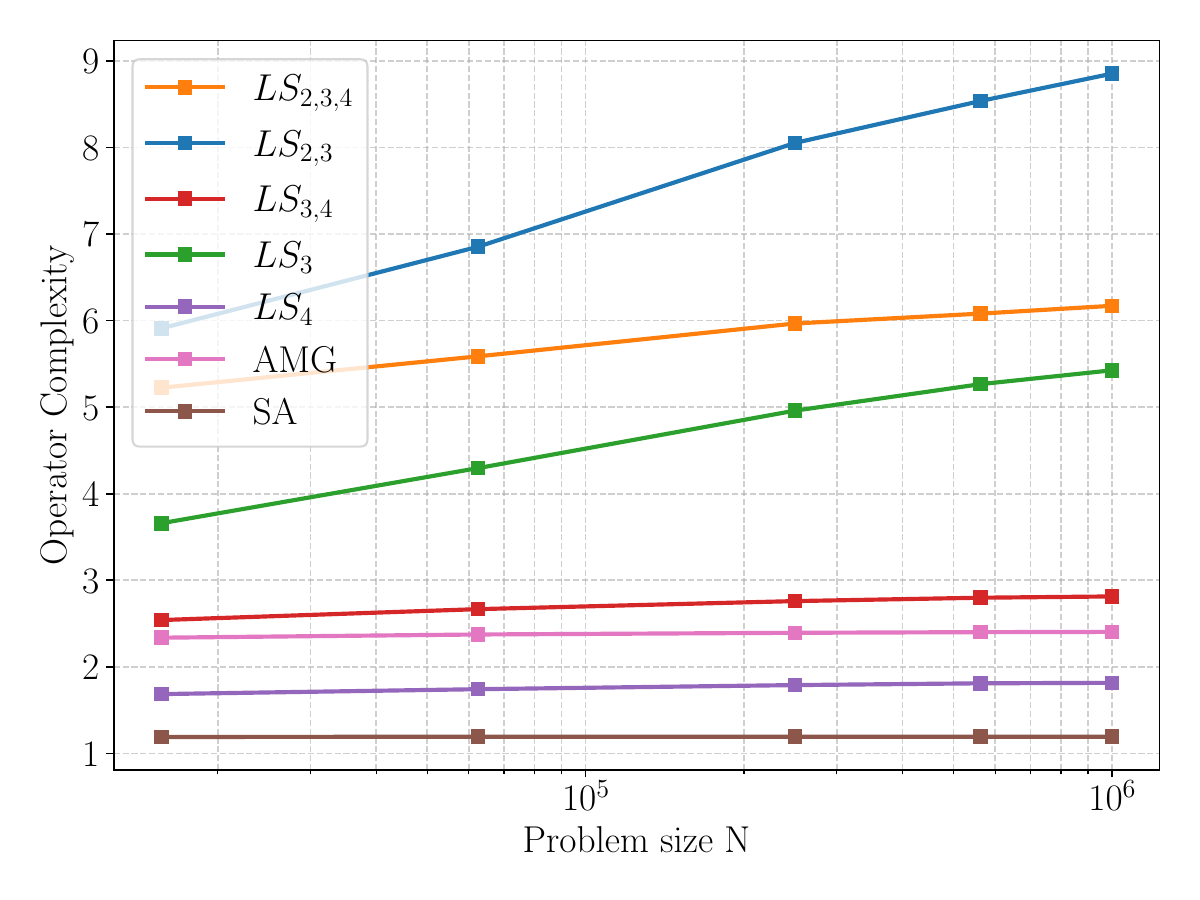}
    \caption{Operator complexity vs. problem size $N$}
    \label{fig:OC_vs_N}
  \end{subfigure}
  \caption{Solver performance as a function of total DOFs $N$ for fixed $\epsilon=10^{-5}$ and $\theta = \pi/6$. For reference, the average convergence factor at $\epsilon = 10^{-5}$ for $LS_{2,3,4}$, AMG, and SA are (respectively) $\rho = 0.51, 0.9, 0.97$.}
  \label{fig:size_scaling}
\end{figure}

Last, we identify $LS_{2,3,4}$ as a good mix of robust and scalable, with manageable operator complexity, and compare convergence of SA, AMG, and $LS_{2,3,4}$ as a function of anisotropy angle $\theta \in(0,\pi/2)$ for fixed $N =250,000$ and $\epsilon = 10^{-5}$ in \Cref{fig:angle_scaling}. We see that convergence of $LS_{2,3,4}$ is almost uniform across all angles, with surprisingly some degradation near the grid-aligned case of $\theta = 0$. In this particular region, classical AMG also performs quite well, because for truly grid-aligned anisotropy the traditional AMG assumptions of algebraically smooth error varying slowly in the direction of strong connections holds. 

\subsection{Anisotropic heat conduction in magnetic confinement fusion}
\label{sec:numerics:aniso}

We now consider extremely anisotropic diffusion equations as arise in magnetic
confinement fusion simulations. These problems are significantly more challenging than
those in the previous section for a number of reasons, and were a primary motivation for this work. Anisotropy is spatially varying and time evolving, aligned with the magnetic field $\mathbf{B}$, and with anisotropy ratios that can reach $10^{10}$ or higher \cite{hoelzl2021jorek,wimmer2024accurate}. In magnetic confinement fusion, the disparity in diffusion coefficient $\kappa_\| \gg \kappa_\perp$, for flow parallel and perpendicular to the magnetic field, respectively, represents heat flowing rapidly along magnetic field lines but very slowly across them.The presence of closed field lines yields topologically nontrivial null spaces and makes the implicit system extremely ill-conditioned (and ill-posed without a time derivative), often beyond the reach of standard AMG. Last, significant care has to be taken in discretizing the equations to avoid spurious heat loss perpendicular to the magnetic field. Indeed, for scenarios in which the mesh cannot feasibly be aligned with the field lines -- as is the case for many types of magnetohydrodynamic instabilities with complex magnetic field configurations -- continuous or discontinuous primal discretizations result in
\begin{wrapfigure}{r}{0.5\textwidth} %
\centering
\vspace{-2ex}
  \includegraphics[width=0.5\textwidth]{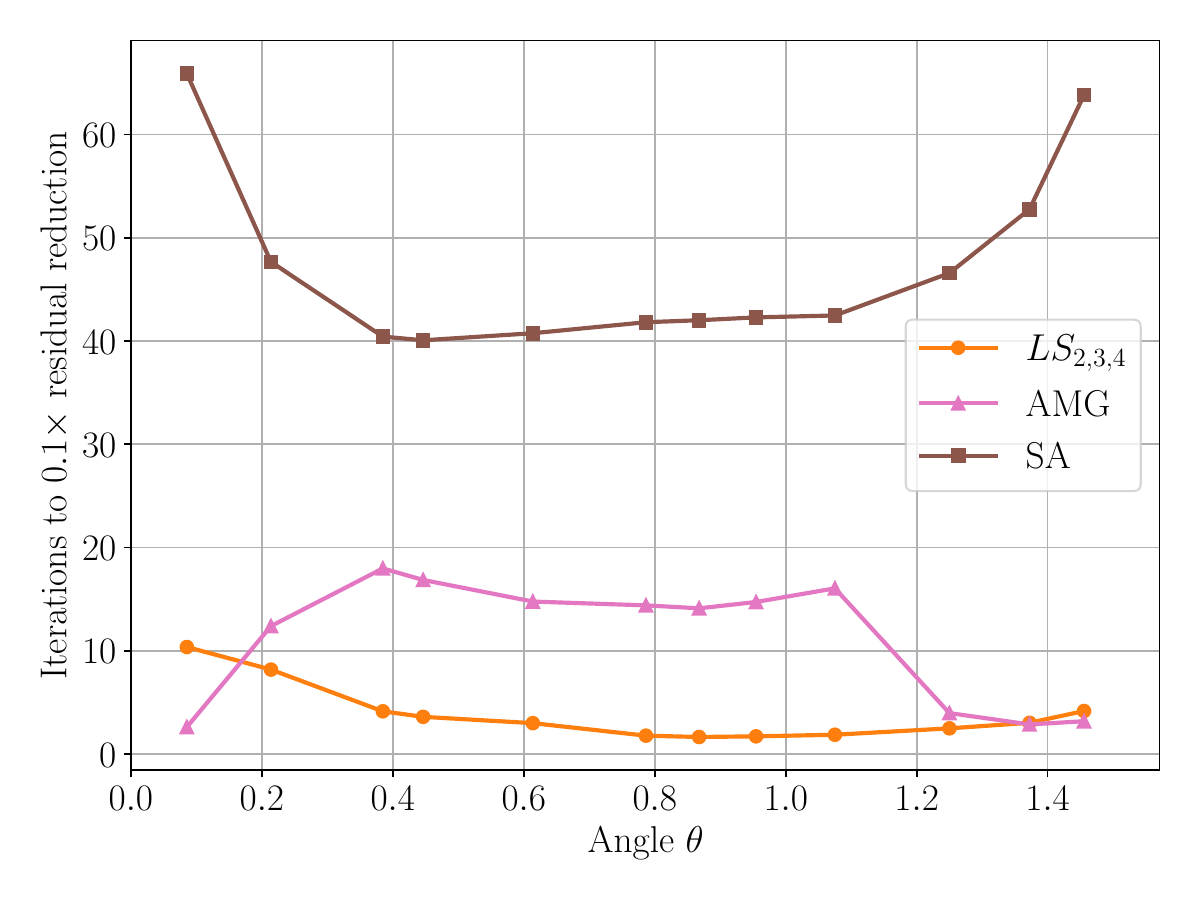}
  \caption{Solver performance as a function of angle $\theta$ for fixed total DOFs $250,000$ and $\epsilon=10^{-5}$.}
  \label{fig:angle_scaling}
  \vspace{-3ex}
\end{wrapfigure}
nontrivial heat loss, and are not sufficient for accurate and realistic simulations. In previous work of ours \cite{wimmer2024fast}, we considered a mixed formulation based on one proposed in \cite{gunter2007finite}, which introduces an auxiliary variable $\zeta_h$ corresponding to the directional gradient $\mathbf{b} \cdot \nabla T_h$, for temperature field $T_h$ and unit-length magnetic field $\mathbf{b} = \mathbf{B}/|\mathbf{B}|$. We then used a discontinuous Galerkin-based upwind stabilized form for the directional gradients, which, next to improved accuracy, in particular led to an efficient solver procedure when using multigrid methods targeting hyperbolic equations \cite{manteuffel2018nonsymmetric}. However, this solver approach relied on magnetic field configurations with open field lines only, which is not realistic for magnetic confinement fusion simulations. For a naive $H^1$ primal discretization, we have also found two-level solvers based on GMsFEMs to be effective \cite{vasilyeva2025multiscale}, indicating the potential of using local spectral problems.

In this work, we consider the mixed form from \cite{gunter2007finite}. With care this discretization can offer good discretization accuracy, but is simpler than the upwind stabilized form of \cite{wimmer2024fast} and our continuous Galerkin-based extension \cite{wimmer2024accurate}, which lead to non-symmetric matrices that we will consider in future work. We consider quadrilateral meshes here, because \cite{gunter2007finite} is accurate on such meshes but we have observed significant degradation in accuracy for triangular meshes \cite{wimmer2024fast}. The temperature is posed in the $k$th polynomial order continuous Galerkin space $\mathbb{V}_T$, corresponding to $Q_k$ equipped with suitable Dirichlet boundary conditions. Further, the auxiliary variable is set in the discontinuous Galerkin space $\mathbb{V}_\zeta = dQ_{k-1}$. We then solve for $(T_h, \zeta_h) \in (\mathbb{V}_T, \mathbb{V}_\zeta)$ such that
\begingroup
\begin{subequations}
\begin{align}
&\left\langle \eta, \frac{\partial T}{\partial t}\right\rangle + \langle \eta, \sqrt{\kappa_\Delta} \mathbf{b} \cdot \nabla T \rangle + \langle \nabla \eta, \kappa_\perp \nabla T \rangle = 0 &\forall \eta \in \mathring{\mathbb{V}}_T, \\
&\langle \phi, \zeta_h \rangle = \langle \phi, \sqrt{\kappa_\Delta} \mathbf{b} \cdot \nabla T \rangle &\forall \phi \in \mathbb{V}_\zeta,
&
\end{align}
\end{subequations}
\endgroup
for parallel and perpendicular conductivity coefficients $\kappa_\parallel$ and $\kappa_\perp$, respectively, and $\kappa_\Delta = \kappa_\parallel - \kappa_\perp$. Additionally, $\mathring{\mathbb{V}}_T$ corresponds to $\mathbb{V}_T$ equipped with homogeneous boundary conditions. If we let $M_T$ and $M_\zeta$ denote mass matrices, $L$ the discrete (isotropic) Laplacian, $G_b$ the discrete scalar-form transport operator $\mathbf{b} \cdot \nabla T$, the linearized system associated with each time step takes the form
{\small
\begin{equation}\label{eq:mat-discr}
\begin{pmatrix}
  \frac{1}{\Delta t}M_T + \kappa_\perp L && \sqrt{\kappa_\Delta} G_b^T \\
  -\sqrt{\kappa_\Delta}G_b && M_\zeta
\end{pmatrix}.
\end{equation}
Eliminating the auxiliary variable for a Schur complement in temperature, we have
\begin{equation}\label{eq:approx-S}
S_T\coloneqq 
  \frac{1}{\Delta t}M_T + \kappa_\perp L + \sqrt{\kappa_\Delta} G_b^TM_\zeta^{-1} \sqrt{\kappa_\Delta}G_b \approx \begin{pmatrix} D_T^{1/2} & \sqrt{\kappa_\Delta} G_b^TM_\zeta^{-1/2} \end{pmatrix}
  \begin{pmatrix} D_T^{1/2} \\ \sqrt{\kappa_\Delta} M_\zeta^{-1/2}G_b \end{pmatrix},
\end{equation}
}where $M_\zeta$ is either diagonal or block diagonal due to the local support of DG basis functions, thus allowing fast closed form construction of $M_\zeta^{-1/2}$, and $D_T \approx \frac{1}{\Delta t}M_T + \kappa_\perp L$ is simply the matrix diagonal. Because the auxiliary variable can be eliminated directly and Schur complement explicitly formed, we apply a block LDU preconditioner to \eqref{eq:mat-discr} wrapped with flexible GMRES and use inner CG to solve $S_T$ preconditioned by the LS-AMG-DD method proposed here built on the least-squares approximation in \eqref{eq:approx-S}. Note, this approximation is only good for small $\kappa_\perp \ll \kappa_\Delta$, but this encompasses our general regime of interest. For mixed or more isotropic regimes, one could introduce an additional variable so that $\Delta = (\nabla)^T\nabla$ is posed in a mixed least-squares sense, but such a generalization is outside the scope of this paper.

As a test problem, we consider a temperature field whose gradient is aligned with closed field lines inscribed in a 2D unit square domain $\Omega$, following \cite{sovinec2004nonlinear,gunter2007finite}. $\mathbf{B}$ and $T$ are then given by
\begingroup
\begin{subequations} \label{Nimrod_IC}
    \begin{align}
        &T(\mathbf{x}, 0) = \cos\big(\pi (x - 1/2)\big)\cos\big(\pi(y - 1/2)\big), \;\;\; T|_{\partial \Omega} = 0,\\
        &\mathbf{B}(\mathbf{x}) = \left(-\frac{\partial T(\mathbf{x}, 0)}{\partial y}, \frac{\partial T(\mathbf{x}, 0)}{\partial x}\right)^T,
    \end{align}
\end{subequations}
\endgroup
(see Figure \ref{fig_nimrod_2d}) and we fix $\kappa_\perp = 1$, while varying $\kappa_\parallel \in [10^2,10^8]$. Further, the time step is set to $\Delta t = 10^{-3}$, and we consider a quadrilateral mesh $\Delta x = 0.017$, which following \cite{wimmer2024accurate} is regular up to a small perturbation of each interior node in order to avoid fortuitous cancellations related to symmetries.

\begin{wrapfigure}{r}{0.35\textwidth} %
\centering
\includegraphics[width=0.35\textwidth]{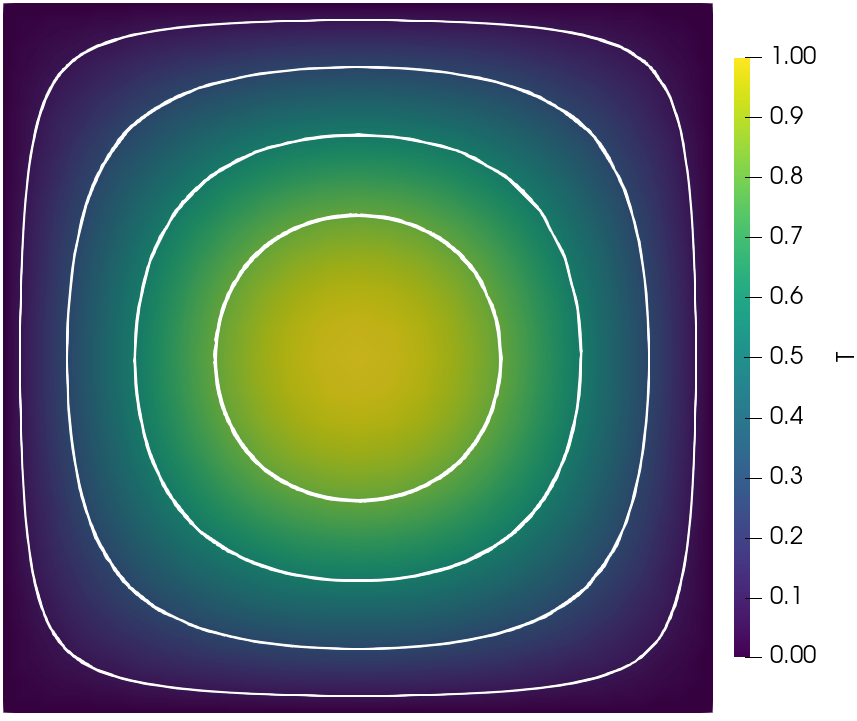}
\caption{Initial temperature field for text case \eqref{Nimrod_IC}, with fixed magnetic field lines shown in white.} \label{fig_nimrod_2d}
\vspace{-3ex}
\end{wrapfigure}

For the mixed discretization on quads, even on small problems \emph{conjugate gradient accelerated by classical AMG or smoothed aggregation preconditioning applied to $S_T\bx = \bbf$ does not reduce the residual by a factor of 0.1 in 1000 iterations.} Thus this is not just a matter of slow convergence, but a fundamental inability of existing AMG techniques to solve this problem (we have tried many other methods and tuning of parameters as available in PyAMG as well without success). To that end, we do not present results from other AMG methods, as they have all failed. In addition, the single pass of standard aggregation level used in \Cref{sec:numerics:laplacian} also fails to converge on large anisotropy. Effectively, the overlapping spectral problems are not sufficiently large and expressive to capture the near null space.

To that end, we consider two-pass aggregation based on classical aggregation in PyAMG \cite{bell2023pyamg} and discussed in \Cref{sec:amg-dd:alg}, with a coarsening ratio of $[4,5]$ meaning four on the first level and five on all subsequent levels as discussed in \Cref{sec:amg-dd:alg}. \Cref{fig:mcf_results} shows scaling studies in anisotropy ratio and problem size, while fixing the other variable. We see that the proposed LS-AMG-DD method is able to robustly solve these problems across a wide range of anisotropy ratios, all the way to $\kappa_\| / \kappa_\perp = 10^8$. Scalability in problem size is clearly suboptimal in both convergence factor and operator complexity, but here we simply seek a multilevel method that is able to solve these equations on large-scale. 

\begin{figure}[!htb]
  \centering
  \begin{subfigure}[t]{0.475\textwidth}
    \centering
    \includegraphics[width=\textwidth]{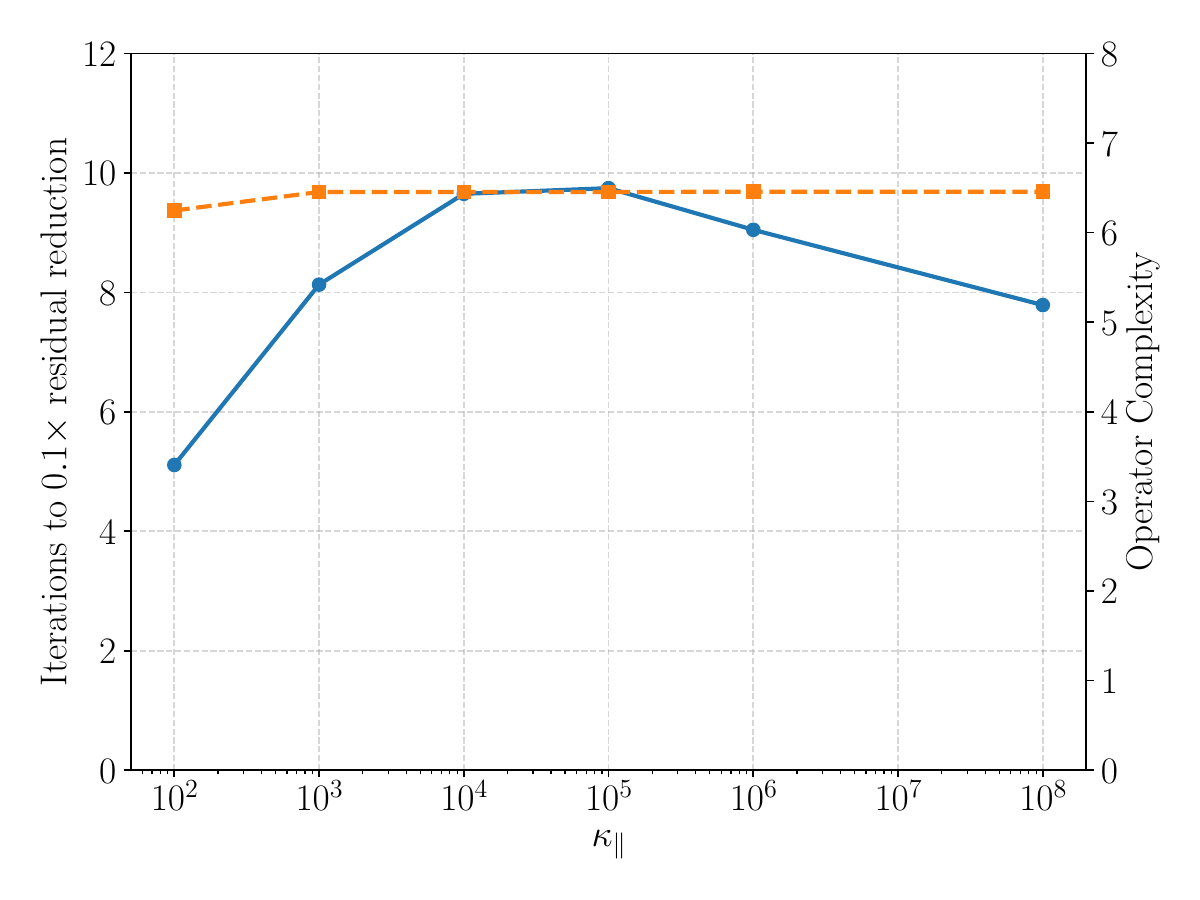}
    \caption{Iterations to 0.1$\times$ residual reduction (solid) and operator complexity (dashed) vs. parallel conductivity $\kappa_\|$.}
    \label{fig:mcf_iters_vs_k}
  \end{subfigure}
  \quad
  \begin{subfigure}[t]{0.475\textwidth}
    \centering
    \includegraphics[width=\textwidth]{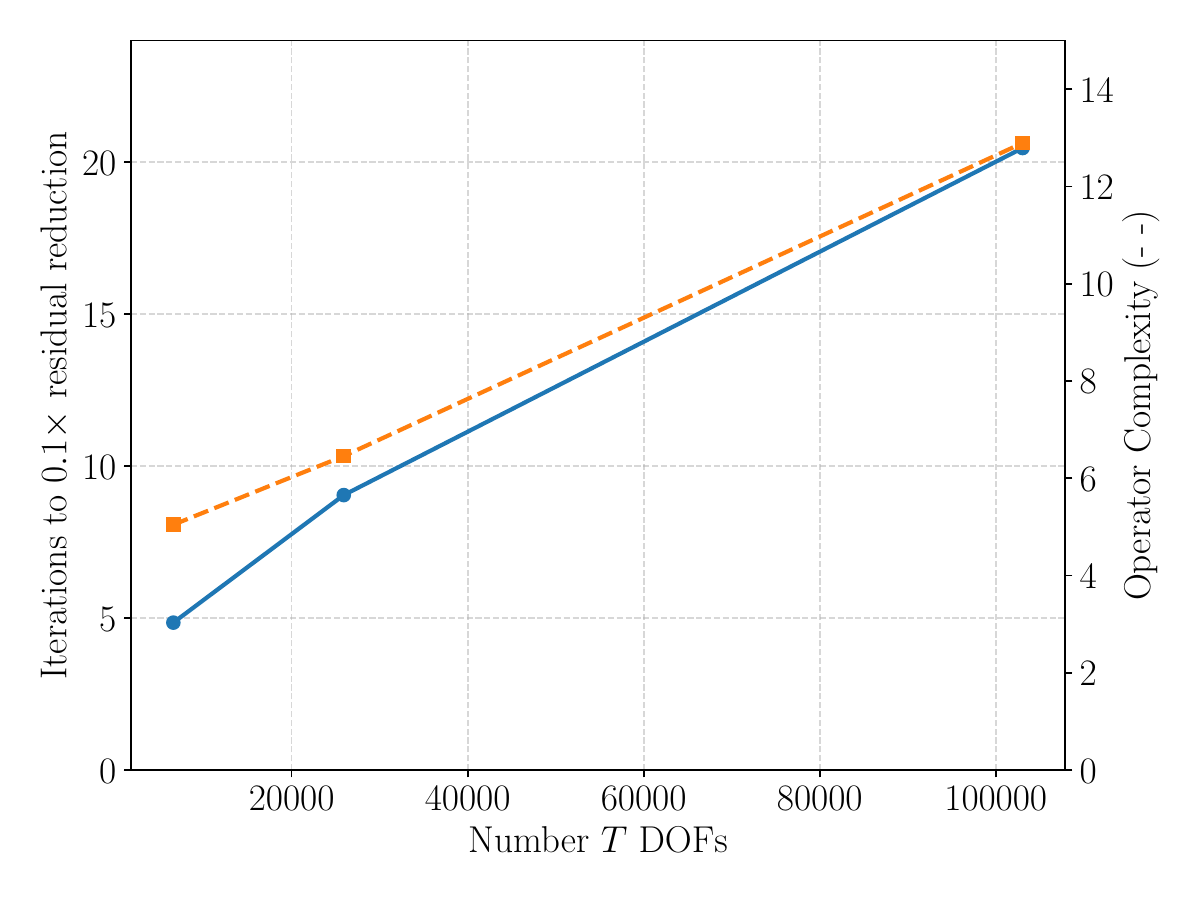}
    \caption{Iterations to 0.1$\times$ residual reduction (solid) and operator complexity (dashed) vs. problem size $N$.}
    \label{fig:mcf_iters_vs_N}
  \end{subfigure}
  \caption{Solver performance and operator complexity as a function of total DOFs $N\approx 25$K for fixed mesh refinement $\Delta x=0.017$ and varying $\kappa_\| \in[10^2, 10^8]$ (left), and fixing $\kappa_\| = 10^6$ and considering three levels of mesh refinement, $\Delta x\in\{0.008,0.017,0.034\}$. For reference, the average convergence factors range from 0.64 at $\kappa_\| = 10^2$ to $0.78$ at $\kappa_\| = 10^5$, for fixed $\kappa_\perp = 1$.}
  \label{fig:mcf_results}
  \vspace{-4ex}
\end{figure}

\section{Conclusion}
\label{sec:conclusion}

In this work we introduced a new algebraic multilevel method that combines overlapping Schwarz smoothers with locally constructed spectral coarse spaces for sparse least-squares operators of the form 
$A=G^TG$. We first review the distinct fields that spectral coarse grids arise in, namely DD, AMGe, and GMsFEMs. By exploiting the factorized structure of $A$, we develop an inexpensive SPSD splitting that enables localized generalized eigenproblems whose solutions are real-valued, definite, and define effective and sparse coarse-grid transfer operators. The resulting approach provides a fully algebraic and naturally multilevel framework that avoids global eigenproblems, allows fully algebraic coarsening, and maintains reasonable sparsity through the use of nonoverlapping supports in columns of interpolation. In particular, the method is able to coarsen slowly and maintain low operator complexities like classical AMG methods, or coarsen aggressively and use large local spectral problems to define coarse basis functions when necessary.

Numerical experiments demonstrate that the proposed LS–AMG–DD method achieves robustness and scalability across challenging problems, including highly anisotropic diffusion and extreme anisotropy arising in magnetic confinement fusion models, regimes where AMG methods often struggle or fail outright. The method delivers convergence rates largely independent of anisotropy strength, and moderate growth in complexity and iteration counts, relative to existing state of the art. For our model fusion problems, existing AMG methods are unable to reduce the residual even marginally in 1,000 preconditioned CG iterations, while the proposed methods are able to robustly solve to high accuracy. A primary topic of future work will be adaptive measures to automate the coarsening rate and selection of coarse eigenmodes, so that the algorithm only works as hard as necessary to provide a robust solve. In addition, we will consider extensions to non-least squares problems, extensions to nonsymmetric problems and relating to recent developments in nonsymmetric spectral coarse spaces, e.g. \cite{krzysik2025optimal,ali2025generalized,bootland2023overlapping}, and a more performant implementation. Overall, this work demonstrates that overlapping spectral domain decomposition ideas can be fused with algebraic multigrid coarsening to produce practical and robust multilevel solvers for a broad class of least-squares systems.

\bibliographystyle{siamplain}
\bibliography{dd-refs}

\end{document}